\def\C{\mathbb{C}}
\def\R{\mathbb{R}}
\def\N{\mathbb{N}}
\def\cC{{\cal C}}
\def\cS{{\cal S}}
\newtheorem{defn}{Definition}
\newtheorem{notn}[defn]{Notation}
\newtheorem{lemma}[defn]{Lemma}
\newtheorem{proposition}[defn]{Proposition}
\newtheorem{theorem}[defn]{Theorem}
\newtheorem{example}[defn]{Example}
\newenvironment{proof}[1]{
  \trivlist \item[\hskip \labelsep{\it #1}]}{\hfill\mbox{$\square$}
  \endtrivlist}
\title{On sum of squares certificates of non-negativity on a strip}
\date{}
\author{Paula Escorcielo\footnote
{{\scriptsize Partially supported by the Argentinian grants} {\footnotesize UBACYT 20020160100039BA} 
{\scriptsize and} {\footnotesize PIP 11220130100527CO CO\-NI\-CET}.
\newline \textbf{MSC Classification:} 12D15, 13J30, 14P10.
\newline \textbf{Keywords:} Certificates of Non-negativity, Sums of squares, Degree bounds. 
} 
\qquad \qquad
Daniel Perrucci$^{*}$
\\[3mm]
{\small Departamento de Matem\'atica, FCEN, Universidad de Buenos Aires, Argentina}\\ 
{\small IMAS, CONICET--UBA, Argentina}
}
\begin{document}

\maketitle

\begin{abstract}
In \cite{Marsh}, Murray Marshall proved that every $f \in \R[X,Y]$ non-negative on the strip 
$[0,1] \times \R$ can be written as $f= \sigma_0 + \sigma_1 X(1-X)$
with $\sigma_0, \sigma_1$ sums of squares in $\R[X,Y]$.
In this work, we present a few results concerning this representation in particular 
cases. 
First, 
under the assumption $\deg_Y f \leq 2$, 
by characterizing the extreme rays of a suitable cone, we obtain a degree bound for each term.
Then, we consider the case of $f$ positive 
on $[0,1] \times \R$ and \emph{non-vanishing at infinity}, and 
we show again a degree bound for each term, coming from a 
constructive method to obtain the sum of squares representation. Finally, we show that this constructive 
method 
also works in the case of $f$ having only a finite number of zeros, all of them 
lying on the boundary of the strip, and such that $\frac{\partial f}{\partial X}$ does not vanish at any of them. 
\end{abstract}

\section{Introduction}

Let $g_1, \dots, g_s \in \R[X_1,\dots,X_n]$ and consider
the basic closed semialgebraic set 
$$
S = \{x \in \R^n \ | \  g_1(x) \ge 0, \dots, g_s(x) \ge 0 \}.
$$
Given $f \in \R[X_1,\dots,X_n]$ such that $f$ is non-negative on $S$, a classical 
question is if there is a 
representation of $f$ 
which makes evident this fact.
Concerning this problem, there are two important algebraic objects associated to $g_1, \dots, g_s$:  
the preordering 
$$
T(g_1, \dots, g_s) = \Big\{\sum_{I \subset \{1, \dots, s \}}  \sigma_I \prod_{i \in I}g_i
 \ | \ 
\sigma_I \in \sum \R[X_1, \dots, X_n]^2 \hbox{ for every } I \subset \{1, \dots, s \} \Big\}
$$
and 
the quadratic module 
$$
M(g_1, \dots, g_s) = \left\{\sigma_0 + \sigma_1g_1 + \dots + \sigma_sg_s \ | \ 
\sigma_0, \sigma_1, \dots, \sigma_s \in \sum \R[X_1, \dots, X_n]^2\right\}.
$$
It is clear that $M(g_1, \dots, g_s) \subset T(g_1, \dots, g_s)$, but the equality only holds
in some special cases, for instance when $s = 1$.
It is also clear that every polynomial $f \in T(g_1, \dots, g_s)$ is non-negative on $S$,
but the converse is not true in general (see \cite[Example]{Ste}).

Schm\"udgen Positivstellensatz (\cite{Schm}) states that if $S$ is compact, 
every polynomial $f \in \R[X_1, \dots,X_n]$ positive on $S$ belongs to $T(g_1, \dots, g_s)$. 
On the other hand, Putinar Positivstellensatz (\cite{Put}) states that if $M(g_1, \dots, g_s)$ is archimedean,
every polynomial $f \in \R[X_1, \dots,X_n]$ positive on $S$ belongs to $M(g_1, \dots, g_s)$. 
Recall that the quadratic module $M(g_1, \dots, g_s)$ is archimedean if 
there exists $r \in \N$ such that 
$$
r - X_1^2 - \dots - X_n^2 \in M(g_1, \dots, g_s).
$$
Note that if $M(g_1, \dots, g_s)$ is archimedean, then $S$ is compact, but again, the converse is not true in general 
(see \cite[Example 4.6]{JacPres}).

In the case where $\dim S \ge 3$ or in the case where $n = 2$ and $S$ contains an affine full-dimensional cone,
there exist polynomials non-negative on $S$ which do not belong to $T(g_1, \dots, g_s)$ (\cite{Sche}).
On the contrary, M. Marshall proved in \cite{Marsh} the following result for polynomials non-negative on the strip
$[0,1] \times \R \subset \R^2$:

\begin{theorem} \label{th:Marshall}
 Let $f \in \R[X,Y]$ with $f \geq 0 $ on $[0,1]\times \R$. Then
 \begin{align}\label{repr_f}
 f= \sigma_0 + \sigma_1 X(1-X)
 \end{align}
 with $\sigma_0, \sigma_1 \in \sum \R[X,Y]^2$.
\end{theorem}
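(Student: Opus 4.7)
I plan to proceed by induction on $d := \deg_Y f$. The base case $d = 0$ reduces to the classical one-variable Positivstellensatz on $[0,1]$: every univariate polynomial non-negative on $[0,1]$ has the form $\sigma_0 + \sigma_1 X(1-X)$ with $\sigma_0, \sigma_1 \in \sum \R[X]^2 \subset \sum \R[X,Y]^2$.

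For the inductive step, since $f(x,\cdot) \geq 0$ on $\R$ for each $x \in [0,1]$, the degree $d$ must be even ($d = 2m$) and the $Y$-leading coefficient $a(X) \in \R[X]$ satisfies $a \geq 0$ on $[0,1]$. The plan is to peel off from $f$ an element of $M(X(1-X))$ whose leading $Y$-term is $a(X) Y^{2m}$, so that the remainder has $Y$-degree $< 2m$ and is still non-negative on the strip, permitting induction. The naive attempt --- apply the base case to $a$ to get $a = \tau_0 + \tau_1 X(1-X)$ and subtract $(\tau_0 + \tau_1 X(1-X)) Y^{2m}$ --- fails because the remainder need not stay non-negative. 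The refined attempt is a ``completion of the square in $Y$'': choose $c_{m-1}(X), \ldots, c_0(X)$ so that
\[
a(X) \bigl(Y^m + c_{m-1}(X) Y^{m-1} + \cdots + c_0(X)\bigr)^2
\]
matches the top $m+1$ coefficients of $f$ in $Y$, leaving a remainder of $Y$-degree $< m$ whose non-negativity on the strip can be checked fiberwise over $x \in [0,1]$. Since this construction inverts $a(X)$, one must first reduce to the case $a > 0$ on $[0,1]$, which I would do by perturbing to $f + \varepsilon(1+Y^2)^m$ --- noting that $(1+Y^2)^m \in \sum \R[Y]^2$ --- and later letting $\varepsilon \to 0$. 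An alternative I would try in parallel is to apply Putinar's Positivstellensatz on the compact truncations $[0,1]\times[-N,N]$, where the module $M(X(1-X), N^2-Y^2)$ is archimedean, and then eliminate the generator $N^2 - Y^2$ as $N \to \infty$.

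The step I expect to be the main obstacle is the elimination/limiting step in either variant: one must clear the denominator $a(X)$ (which may vanish on $[0,1]$) while preserving an SOS form on the full strip, or eliminate $N^2 - Y^2$ from the Putinar representation as $N \to \infty$. Both reductions require uniform bounds on the $Y$-degrees of the SOS pieces in terms of $\deg_Y f$ alone, and these bounds are where the particular geometry of the strip --- unbounded only in one direction, with $Y$-leading coefficient non-negative on $[0,1]$ --- must be used essentially. This uniform degree control is the heart of the proof, and once it is established the induction closes and the limit $\varepsilon \to 0$ (or $N \to \infty$) takes place inside a finite-dimensional closed cone of representations.
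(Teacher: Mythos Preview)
This theorem is not proved in the present paper; it is quoted from Marshall \cite{Marsh} and serves as the starting point for the degree-bound and constructive results that follow. So there is no in-paper proof to compare your proposal against.

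That said, your sketch has a genuine gap beyond the one you already flag. The completion-of-the-square step does not leave a non-negative remainder in general. Take $f(Y) = Y^4 + Y^3 + Y^2 + 1$, constant in $X$ and strictly positive on $\R$; here $m=2$, and matching the top three $Y$-coefficients via $\bigl(Y^2 + \tfrac12 Y + \tfrac38\bigr)^2$ leaves the remainder $\tfrac{55}{64} - \tfrac38 Y$, which is linear in $Y$ and unbounded below on the strip. The induction cannot continue from there. The problem is structural: forcing the remainder to have $Y$-degree below $m$ imposes no pointwise inequality between $f$ and the square you subtract, so nothing controls the sign of what is left. The perturbation $f + \varepsilon(1+Y^2)^m$ does not help, since the same obstruction persists for the perturbed polynomial.

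Your alternative route --- Putinar on $[0,1]\times[-N,N]$ and then elimination of $N^2 - Y^2$ as $N\to\infty$ --- correctly isolates the entire difficulty in the uniform degree bounds, but supplies no mechanism to obtain them; that is exactly the hard content, and it does not follow from compactness or any general principle. Marshall's actual argument is not an induction on $\deg_Y f$ and does not pass through compact truncations in this way.
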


In other words, Theorem \ref{th:Marshall} states that every polynomial non-negative on the strip $[0,1]\times \R$ 
belongs to $M(X(1-X))$. 
This result was later extended to other two-dimensional 
semialgebraic sets in \cite{NguPowers} and \cite{SchWen}.

In this paper, we present some results concerning effectivity issues around 
the representation obtained in Theorem \ref{th:Marshall}, in 
particular cases.

For instance, a natural question is if it is possible to bound the degrees of each term  
in (\ref{repr_f}).
In Section \ref{sec:deg_2}, we
prove a degree bound for each term in the case 
$\deg_Y f \leq 2$. To this end, 
we first characterize all the extreme rays of a suitable cone 
containing $f$
and study their representation as in 
(\ref{repr_f}). 
The main result in this section is the following.

\begin{theorem}\label{th:main_degY_2} Let $f \in \R[X,Y]$ 
with $f \ge 0$ on $[0,1]\times \R$ and
$\deg_Y f \le 2$.  
Then $f$ can be written as in (\ref{repr_f}) with 
$$
\deg (\sigma_0),  \deg (\sigma_1 X(1-X)) \le \deg_X f + 3.
$$
 
\end{theorem}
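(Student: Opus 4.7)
The plan is to finitize the problem via a convex-cone argument. Set $d := \deg_X f$ and consider the closed convex cone
$$
\cC_d := \{g \in \R[X,Y] : \deg_X g \le d,\ \deg_Y g \le 2,\ g \ge 0 \text{ on } [0,1]\times\R\},
$$
pointed inside the finite-dimensional vector space of polynomials of bidegree $(d,2)$. Since $\cC_d$ is closed, convex and pointed in a finite-dimensional space, the combination of Krein-Milman and Carath\'eodory expresses $f$ as a finite nonnegative linear combination of generators of extreme rays of $\cC_d$. It is therefore enough to establish the degree bound $\deg \sigma_0,\, \deg(\sigma_1 X(1-X)) \le d+3$ in (\ref{repr_f}) for each extreme ray generator and then to sum the resulting representations: nonnegative scalar multiples and sums preserve both the sum-of-squares structure and the degree bound.

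Writing $g = a(X) + b(X)Y + c(X)Y^2$, the condition $g \in \cC_d$ unfolds as $c \ge 0$ on $[0,1]$ together with $\Delta := 4ac - b^2 \ge 0$ on $[0,1]$. The heart of the proof is to classify the extreme rays of $\cC_d$ in terms of these two univariate conditions. I would split into three cases: the \emph{degenerate case} $c \equiv 0$, which forces $b \equiv 0$ and reduces $g$ to a univariate polynomial $a(X) \ge 0$ on $[0,1]$; the \emph{perfect-square case} $\Delta \equiv 0$, in which $g$ factors as $h(X)(\alpha(X) + \beta(X)Y)^2$ for some $h \ge 0$ on $[0,1]$; and the \emph{generic case}, in which both $c$ and $\Delta$ are nontrivial. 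In the generic case I would exploit the completion-of-square identity
$$
4\, c(X)\, g = \bigl(2 c(X) Y + b(X)\bigr)^2 + \Delta(X)
$$
to translate extremality of $g$ into extremality of $c$ and $\Delta$ in the classical univariate Lukacs cone on $[0,1]$.

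On each extreme ray the representation (\ref{repr_f}) with the target bound is then built by combining Markov-Lukacs on the univariate factors with the identity above. The degenerate case follows directly from Markov-Lukacs applied to $a$; the perfect-square case follows from Markov-Lukacs applied to $h = h_0 + h_1 X(1-X)$, setting $\sigma_0 = h_0(\alpha + \beta Y)^2$ and $\sigma_1 = h_1(\alpha + \beta Y)^2$; in the generic case, Markov-Lukacs of $\Delta$ together with the identity expresses $4cg$ as a sum of the required form, after which one must divide out $c$ polynomially, a step I expect to be justified by the extremality of $g$ forcing enough double zeros along the locus $\{c = 0\}$. The extra room $+3$ in the bound accommodates both the degree-$2$ factor $X(1-X)$ and an additional linear factor absorbing the parity of the Markov-Lukacs decompositions. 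The main obstacle is the extreme-ray classification itself and, within it, the polynomial division by $c$ in the generic case; both require a careful analysis of the zero locus of $g$ in the strip and of the rigidity that extremality imposes on the triple $(a,b,c)$.
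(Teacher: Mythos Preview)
Your overall strategy---reduce to extreme rays of the cone of nonnegative bidegree-$(d,2)$ polynomials and then apply a Markov--Luk\'acs decomposition to the univariate factor---is exactly the paper's. The point you have not pinned down is the trichotomy itself: the paper's main technical lemma (Theorem~\ref{th:main}) says precisely that your ``generic case'' is \emph{empty}. Every extreme ray of the cone, after homogenizing in $Y$, has the form $r(X)\bigl(p(X)Y+q(X)Z\bigr)^2$, hence dehomogenizes to a univariate factor times a perfect square; in your notation this forces $\Delta\equiv 0$ on any extreme ray. Once this is known, the bound $d+3$ follows from Markov--Luk\'acs on $r$ essentially as you outline.

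The gap is therefore your treatment of the generic case. The completion-of-square identity gives a representation of $4c\,g$, not of $g$, and ``dividing out $c$ polynomially'' is not a cleanup step: it is equivalent to showing that the extreme ray was already a perfect square, which is the whole content. Your hope that extremality of $g$ forces extremality of $c$ and of $\Delta$ in the univariate Luk\'acs cone does not hold in any form that makes the division go through. The paper instead homogenizes (so that roots of $c$ on $[0,1]$ become genuine zeros of $\bar f$ on the compactified strip $\cS$) and runs an induction on $(d,e)$: an extreme $\bar f$ must vanish somewhere on $\cS$ (Lemma~\ref{lem:se_anula}); suitable linear changes of variable in $(Y,Z)$ move the zero to a coordinate axis (Lemmas~\ref{lem:camb_variable}--\ref{lem:camb_variable_d_ig_e}); one then factors a linear polynomial in $X$ out of the appropriate coefficient and drops to a smaller cone. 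The substantive casework---interior zero versus one boundary zero versus two boundary zeros---is handled by explicit perturbation arguments on the discriminant that rule out any non-perfect-square extreme candidate. That casework is what your proposal is missing.
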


In Section \ref{sec:al_app}, we deal again with the question of bounding the degrees of each term in 
(\ref{repr_f}) in a different situation. First, in Section \ref{subsect:positiv}, 
we consider the case where $f$ is positive on $[0, 1] \times \R$
and 
does not \emph{vanish at infinity}. To make this concept precise, we introduce the following definition 
coming from \cite{Pow}:
\begin{defn}
Let $f \in \R[X,Y]$ and $m=\deg_Y f$. 
The polynomial $f$ is \emph{fully} $m$-\emph{ic} on $[0,1]$ if for every $x \in [0,1]$,  $f(x,Y) \in \R[Y]$ has degree $m$.
 \end{defn}

Given
  $$
  f= \sum_{0 \leq i \leq m} \sum_{0 \leq j \leq d} a_{ji}X^jY^i \in \R[X,Y],
  $$
define
  $$
  \bar f= \sum_{0 \leq i \leq m} \sum_{0 \leq j \leq d} a_{ji}X^jY^iZ^{m-i} \in \R[X,Y, Z].
  $$

Note that if $f>0$ on $[0,1]\times \R$ and $f$ is fully $m$-ic on $[0,1]$ then $m$ is even and 
$\bar f>0$ on 
$\{(x, y, z) \ | \ x \in [0, 1], \, y^2 + z^2 = 1 \}$. 

We note as usual
$$
\| f \|_{\infty} = \max \{  |a_{ji}| \ | \ 0 \le i \le m, \ 0 \le j \le d\}.
$$
We prove the following result. 
 
\begin{theorem}\label{thm:f_positivo}    
  Let $f \in \R[X,Y]$ 
  with $f > 0$ on $[0,1]\times \R$, $f$ fully $m$-ic on $[0, 1]$, $d = \deg_X f \ge 2$ and  
      $$
   f^{\bullet} =  \min\{\bar f(x, y, z) \ | \ x \in [0, 1], \, y^2 + z^2 = 1 \} > 0.
   $$  
  Then $f$ can be written as in (\ref{repr_f}) with 
     $$
  \deg (\sigma_0), \deg(\sigma_1 X(1-X)) \leq \frac{d^3(m+1)\|f\|_{\infty}}{f^{\bullet}}.
  $$
   \end{theorem}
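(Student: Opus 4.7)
My plan is to expand the homogenization $\bar f \in \R[X,Y,Z]$ in the Bernstein basis in $X$ with a carefully chosen degree $N$, invoke an effective Bernstein positivity theorem of Powers--Reznick type to guarantee that every $(Y,Z)$-coefficient is strictly positive on the unit circle $S^1 = \{(y,z):y^2+z^2=1\}$, decompose each such positive binary form as a sum of two squares via Hilbert's classical theorem on binary forms, and finally reassemble an SOS-plus-$X(1-X)$-SOS decomposition that dehomogenizes at $Z=1$ to the desired representation of $f$.

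\textbf{Construction.} The homogenization $\bar f(X,Y,Z)$ is homogeneous of degree $m$ in $(Y,Z)$ and by hypothesis satisfies $\bar f \ge f^\bullet$ on the compact set $K = [0,1]\times S^1$. For $N \ge d$, expand
$$
\bar f(X,Y,Z) = \sum_{k=0}^{N}\binom{N}{k}\, X^k(1-X)^{N-k}\,c_k(Y,Z),
$$
where each $c_k \in \R[Y,Z]$ is homogeneous of degree $m$. A Powers--Reznick-type effective Bernstein theorem, which requires roughly $N = O(d^2 \|P\|_\infty / P^\ast)$ for a univariate $P \in \R[X]$ of degree $d$ positive on $[0,1]$, can be applied slice-wise to $\bar f(X,y,z) \in \R[X]$: the supremum norm of this slice on $[0,1]$ is bounded uniformly in $(y,z) \in S^1$ by $(m+1)(d+1)\|f\|_\infty$, while its minimum is at least $f^\bullet$. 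Choosing $N$ of size $d^3(m+1)\|f\|_\infty/f^\bullet$ then makes every $c_k$ strictly positive on $S^1$. By homogeneity this propagates to $c_k > 0$ on $\R^2 \setminus \{0\}$, and Hilbert's theorem on positive binary forms yields $c_k = A_k^2 + B_k^2$ with $A_k, B_k \in \R[Y,Z]$ homogeneous of degree $m/2$.

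\textbf{Assembly and main obstacle.} Depending on the parities of $k$ and $N-k$, each Bernstein basis element $\binom{N}{k}X^k(1-X)^{N-k}$ is written as a square in $X$ plus $X(1-X)$ times a square in $X$: this is trivial when $k$ and $N-k$ have the same parity, and in the mixed case uses the identities $X = X^2 + X(1-X)$ and $1-X = (1-X)^2 + X(1-X)$. Multiplying by $A_k^2 + B_k^2$ preserves the structure, and summing over $k$ produces $\bar f = \tau_0 + \tau_1\,X(1-X)$ with $\tau_0, \tau_1 \in \sum\R[X,Y,Z]^2$ of total degree at most $N+m+1$. Specializing $Z = 1$ then yields $f = \sigma_0 + \sigma_1\,X(1-X)$ with $\sigma_0, \sigma_1 \in \sum\R[X,Y]^2$ satisfying the claimed degree bound. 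The main obstacle is making the effective Powers--Reznick bound match the claimed form $d^3(m+1)\|f\|_\infty/f^\bullet$ exactly: one must verify the sharp constants, control the uniform estimate $\|\bar f\|_{K,\infty} \leq (m+1)(d+1)\|f\|_\infty$, and ensure that the parity-based repackaging inflates degrees by at most an additive constant rather than a multiplicative factor.
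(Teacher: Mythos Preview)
Your proposal is correct and takes essentially the same approach as the paper: expanding $\bar f$ in the Bernstein basis on $[0,1]$ is equivalent to applying P\'olya's theorem on the $1$-simplex to the bihomogenization $F(W,X,Y,Z)$, and the paper likewise invokes the Powers--Reznick effective bound uniformly in $(y,z)\in S^1$, writes each resulting nonnegative coefficient $b_j(Y,1)$ as a sum of squares in $\R[Y]$, and reassembles via the same parity identities $X=X^2+X(1-X)$ and $1-X=(1-X)^2+X(1-X)$. The only differences are notational (your $N$ corresponds to the paper's $N+d$, and you use normalized Bernstein coefficients), and the paper closes with the observation $\|f\|_\infty \ge f(0,0)=\bar f(0,0,1)\ge f^\bullet$ to absorb the additive lower-order term $m+2$ into the stated bound $d^3(m+1)\|f\|_\infty/f^\bullet$.
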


Note that the cases $\deg_X f = 0$ and $\deg_X f = 1$ are not covered by 
Theorem \ref{thm:f_positivo}, but these cases are of a simpler nature.  
If $\deg_X f = 0$,
$f$ belongs to $\R[Y]$ and is non-negative  on $\R$, then $f$ can simply be written as 
a sum of squares in $\R[Y]$ with the degree of each term bounded by $m$ 
(see \cite[Proposition 1.2.1]{Marshall_book} and 
\cite{MagSafSch}).
If $\deg_X f=1$, we have
$$
f(X,Y)= f(1,Y)X+f(0,Y)(1-X)
$$
and, since $f(0,Y)$ and $f(1,Y)$ are non-negative on $\R$, 
again these polynomials can be written as 
sums of squares in $\R[Y]$ with the degree of each term bounded by $m$; 
then, 
using the identities
$$
X = X^2 + X(1-X)
\qquad \hbox{ and } \qquad
1-X = (1-X)^2 + X(1-X),
$$
we take $\sigma_0 = f(1,Y)X^2 + f(0, Y)(1-X)^2$ and 
$\sigma_1 = f(1,Y) + f(0, Y)$ and the identity $f = \sigma_0 + \sigma_1X(1-X)$ holds 
with the degree of each term bounded by $m+2$.

To prove Theorem \ref{thm:f_positivo}, in Section \ref{subsect:positiv} we show a constructive way of producing
the representation in Theorem \ref{th:Marshall}
in the case of $f$ positive on $[0, 1] \times \R$ and fully $m$-ic on $[0, 1]$, 
and then we bound the degrees of each term. 
A similar constructive way of obtaining this representation was already given in 
\cite[Proposition 3]{PowRez} under slightly different hypothesis. 
The idea behind the construction is  
to consider the unbounded variable as a parameter 
and to produce  
a uniform version of 
a representation theorem for the segment $[0,1]$ using the effective version of P\'olya's Theorem 
from \cite{Polya_bound}.
This technique was also used in related problems in \cite{Pow} and \cite{EscPer}.

Finally, in Section \ref{subsect:zeros}, we prove that the constructive method from the previous section 
also works in the case of $f$ non-negative on the strip and having only a finite number of zeros, all of them 
lying on the boundary, 
and such that $\frac{\partial f}{\partial X}$ does not vanish at any of them.

\section{The case $\deg_Y f\leq 2$}\label{sec:deg_2}
In this section we
consider the problem of finding a degree bound for the representation in Theorem \ref{th:Marshall} 
under the assumption $\deg_Y f \le 2$. 
Since it will be more convenient to homogenize with respect to the unbounded variable, 
we introduce the  set 
$$\cS = [0, 1] \times ( \R^2 \setminus \{(0, 0)\} ) \subseteq  \R^3.
$$
It is easy to see that for $\bar f=f_2(X)Y^2+f_1(X)YZ+f_0(X)Z^2$ non-negative on $\cS$ and $x_0 \in [0, 1]$, 
$f_2(x_0) \ge 0$ and $f_0(x_0) \ge 0$ and either $f(x_0, Y, Z) = 0$ or 
$\deg_Y f(x_0, Y, Z)$ and $\deg_Z f(x_0, Y, Z)$ are even numbers; 
therefore, if $X - x_0 \, | \,  f_2$ or $X-x_0 \, | \, f_0$, then $X-x_0 \, | \, f_1$.
Moreover, if $x_0 \in (0, 1)$ and $X - x_0 \, | \, f_2$, then $(X - x_0)^2 \, |\, f_2$. Similarly, 
if $x_0 \in (0, 1)$ and $X - x_0 \, |\, f_0$, then $(X - x_0)^2 \,|\, f_0$.

We introduce the following cone. 

\begin{defn}
Given $d, e \in \N_0$, we define 
$$
\cC_{d, e} 
= \Big\{ 
\bar f=f_2(X)Y^2+f_1(X)YZ+f_0(X)Z^2 \in \R[X,Y, Z]
\ | \ 
$$
$$
\bar f \ge 0 \hbox{ on } \cS, \
\deg f_2 \leq d, \ \deg f_1 \leq \left \lfloor \frac12(d + e) \right \rfloor, \ \deg f_0 \leq e
\Big\}.
$$
\end{defn}

We can think of
$\cC_{d,e}$ as included in $\R^{d + \lfloor \frac12(d + e) \rfloor + e + 3}$ by identifying 
each $\bar f \in \cC_{d,e}$ with its vector of coefficients in some prefixed order.
It is easy to see that $\cC_{d, e}$ is a closed cone which does not contain lines. Therefore, we can use the 
following well-known result (see for instance \cite[Section 18]{Rockafellar}).

\begin{theorem}\label{th:extr_rays} 
Let $\cC \subseteq \R^N$ be a closed cone which does not contain lines, then
every element of $\cC$ can be written as a sum of elements lying on extreme rays of $\cC$.
\end{theorem}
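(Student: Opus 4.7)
The plan is to reduce the statement to the Minkowski/Krein--Milman theorem applied to a suitable compact convex \emph{base} of the cone $\cC$. A base is a compact convex set $K \subseteq \cC$ such that every nonzero $x \in \cC$ has a unique positive multiple lying in $K$; once we have a base, the extreme points of $K$ are exactly the intersections of $K$ with the extreme rays of $\cC$, and decomposing an element of $K$ as a convex combination of extreme points of $K$ will give the required decomposition of an element of $\cC$ as a sum of elements on extreme rays.

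First I would produce the base. Consider the compact set $\cC \cap S^{N-1}$, where $S^{N-1}$ is the unit sphere. Since $\cC$ does not contain lines, we have $\cC \cap (-\cC) = \{0\}$, so $-v \notin \cC$ for every $v \in \cC \cap S^{N-1}$. By a standard separation argument (applying the Hahn--Banach/separating hyperplane theorem in $\R^N$ to each pair $\{v\}$ and $-\cC$, then using compactness of $\cC \cap S^{N-1}$ to extract a single separating functional via a finite sub-cover and averaging, or by noting that the convex hull of $\cC \cap S^{N-1}$ does not contain the origin), we obtain a linear functional $\ell \colon \R^N \to \R$ with $\ell(v) > 0$ for every $v \in \cC \setminus \{0\}$. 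Setting $K = \{ x \in \cC \mid \ell(x) = 1 \}$ yields a closed convex set, and it is bounded because $\ell$ attains a positive minimum $c$ on the compact set $\cC \cap S^{N-1}$, so every $x \in K$ satisfies $\|x\| \le 1/c$. Hence $K$ is compact.

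Next I would apply the Minkowski theorem: in finite dimensions, a compact convex set equals the convex hull of its extreme points. Given any nonzero $x \in \cC$, the point $x/\ell(x)$ lies in $K$, so it can be written as $\sum_{i=1}^{k} \lambda_i e_i$ with $\lambda_i \ge 0$, $\sum \lambda_i = 1$, and each $e_i$ an extreme point of $K$. Multiplying by $\ell(x)$ gives $x = \sum_{i=1}^{k} (\lambda_i \ell(x))\, e_i$. The last step is to check that each $e_i$ lies on an extreme ray of $\cC$: if $e_i = u + v$ with $u, v \in \cC$, then applying $\ell$ gives $\ell(u) + \ell(v) = 1$ with $\ell(u), \ell(v) \ge 0$, so $u$ and $v$ are non-negative multiples of points of $K$; the extremality of $e_i$ in $K$ then forces $u$ and $v$ to be non-negative multiples of $e_i$, which is exactly the definition of $\R_{\ge 0} \cdot e_i$ being an extreme ray.

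The main obstacle in this proof is the first step, namely the existence of a strictly positive linear functional on $\cC \setminus \{0\}$; everything afterwards is a routine application of finite-dimensional convexity. This step, however, is precisely where the hypothesis that $\cC$ is closed and contains no lines is used in an essential way, and it can be handled cleanly by the separation argument sketched above.
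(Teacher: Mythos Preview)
Your proof is correct and follows the standard route: construct a compact convex base $K$ of $\cC$ by slicing with a strictly positive linear functional, invoke the finite-dimensional Minkowski theorem on $K$, and then check that extreme points of $K$ generate extreme rays of $\cC$. The one place where you are a little casual is the existence of the strictly positive functional $\ell$; the cleanest justification among the ones you sketch is that $0 \notin \operatorname{conv}(\cC \cap S^{N-1})$ (since otherwise a relation $0 = \sum \lambda_i v_i$ with $\lambda_i > 0$, $v_i \in \cC \cap S^{N-1}$ would force $-v_j = \lambda_j^{-1}\sum_{i\ne j}\lambda_i v_i \in \cC$, producing a line), and then a strict separation of the compact convex set $\operatorname{conv}(\cC \cap S^{N-1})$ from the origin gives $\ell$.

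As for the comparison with the paper: the paper does \emph{not} give its own proof of this theorem. It simply quotes it as a ``well-known result'' and refers the reader to \cite[Section~18]{Rockafellar}. Your argument is essentially the proof one finds there (Rockafellar builds a base and applies the Minkowski theorem), so you have supplied exactly what the paper chose to outsource.
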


For a given $f \in \R[X, Y]$ non-negative on $[0, 1] \times \R$, the strategy for proving
that Theorem \ref{th:main_degY_2} holds for $f$ is to use the classical idea of
characterizing the extreme rays of $\cC_{d,e}$, then to study 
the homogenized representation
as in Theorem \ref{th:Marshall} for the elements lying on these rays, 
and finally to
decompose $\bar f$ as 
a sum of them. 

Under the additional hypothesis that $d$ and $e$ have the same parity, our characterization of the extreme rays of 
$\cC_{d, e}$ is the following.

\begin{theorem}\label{th:main}
Let $d, e \in \N_0$ such that $d\equiv e \, (2)$.
The extreme rays of $\cC_{d, e}$
are the rays generated by the polynomials of the form $r(X)(p(X)Y + q(X)Z)^2$
with
\begin{itemize}
\item $p$ and $q$ not simultaneously zero  and $(p: q) = 1$, 
\item $r \ne 0$,  $r \ge 0$ on $[0,1]$ and $r$ with $\deg r$ real roots in $[0, 1]$ (counted with multiplicity), 
\item $2 \deg p \le d, 2 \deg q \le e$ and $\deg r = \min \{d - 2\deg p, e - 2\deg q \}.$
\end{itemize}
\end{theorem}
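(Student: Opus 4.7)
The plan is to establish both inclusions in the characterization.

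For the ``if'' direction, I first verify that every polynomial of the stated form $\bar f = r(X)(p(X)Y + q(X)Z)^2$ belongs to $\cC_{d,e}$. Non-negativity on $\cS$ is immediate from $r \ge 0$ on $[0,1]$. The degree bounds on the coefficients $f_2 = rp^2$ and $f_0 = rq^2$ are the two quantities appearing in the minimum defining $\deg r$, and for the middle coefficient one computes $\deg(rpq) = \deg r + \deg p + \deg q$; the bound by $\lfloor(d+e)/2\rfloor$ follows from the minimum together with the parity assumption $d \equiv e\,(2)$, which makes $(d+e)/2$ an integer.

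Next I show that such a $\bar f$ generates an extreme ray. Assume $\bar f = \bar g + \bar h$ with $\bar g, \bar h \in \cC_{d,e}$. Since $\gcd(p,q)=1$, the pair $(p(x_0),q(x_0))$ is non-zero for every $x_0 \in [0,1]$, so $\bar f(x_0,Y,Z)$ vanishes exactly on the line $p(x_0)Y + q(x_0)Z = 0$; the non-negativity of $\bar g$ and $\bar h$ on $\cS$ forces both to vanish on this line, and hence $\bar g(x_0,Y,Z) = c(x_0)(p(x_0)Y + q(x_0)Z)^2$ for some $c(x_0) \ge 0$. Comparing coefficients as $x_0$ varies yields the polynomial identity $g_2 q^2 = g_0 p^2$, and coprimeness then gives $p^2 \mid g_2$ and $q^2 \mid g_0$ in $\R[X]$, so $\bar g = c(X)(pY+qZ)^2$ for some $c \in \R[X]$; similarly $\bar h = c'(X)(pY+qZ)^2$ with $c + c' = r$. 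The degree bounds of $\cC_{d,e}$ impose $\deg c, \deg c' \le \deg r$. Finally, the hypothesis that $r$ has $\deg r$ real roots in $[0,1]$ counted with multiplicity, combined with a Taylor expansion at each such root (using $c, c' \ge 0$ on $[0,1]$, so that both must vanish to even order at any interior root of $r$), shows $r \mid c$ and $r \mid c'$ in $\R[X]$; the degree bound then forces $c = \lambda r$ and $c' = \mu r$ with $\lambda,\mu \ge 0$, so $\bar g$ and $\bar h$ are scalar multiples of $\bar f$.

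For the ``only if'' direction, fix $\bar f \in \cC_{d,e}$ on an extreme ray and set $\Delta = f_1^2 - 4 f_0 f_2$. The crux is to show $\Delta \equiv 0$ as a polynomial in $\R[X]$; granted this, factorization in the UFD $\R[X]$ of the relation $f_1^2 = 4 f_0 f_2$, together with $f_0, f_2 \ge 0$ on $[0,1]$, produces a presentation $\bar f = r(X)(p(X)Y + q(X)Z)^2$ with $\gcd(p,q)=1$ and $r \ge 0$ on $[0,1]$. The remaining degree and root conditions are obtained by contradiction: if $\deg r < \min\{d - 2\deg p, e - 2\deg q\}$, the splitting $r = Xr + (1-X)r$ gives a non-trivial decomposition of $\bar f$ inside $\cC_{d,e}$; and if $r$ has a real root $a \notin [0,1]$ (say $r = r'(a-X)$ with $a > 1$) or an irreducible quadratic factor $(X-\alpha)^2 + \beta^2$ with $\beta \ne 0$, then $r$ splits non-trivially inside the cone of polynomials of degree $\le \deg r$ non-negative on $[0,1]$ via the identities $a - X = (a-1) + (1-X)$ or $(X-\alpha)^2 + \beta^2 = (X-\alpha)^2 + \beta^2$, again producing a non-trivial decomposition of $\bar f$.

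The main obstacle will be proving $\Delta \equiv 0$. If $\Delta \not\equiv 0$, the matrix $M_f(X) = \left(\begin{smallmatrix} f_2 & f_1/2 \\ f_1/2 & f_0 \end{smallmatrix}\right)$ is positive definite on a dense open subset of $[0,1]$ and drops rank only at finitely many points $A \subset [0,1]$. I would construct a perturbation $\bar h \in \cC_{d,e}$ not proportional to $\bar f$ with $\bar f - \epsilon \bar h \in \cC_{d,e}$ for some small $\epsilon > 0$; the decomposition $\bar f = \epsilon \bar h + (\bar f - \epsilon \bar h)$ would then contradict extremality. Positivity of $\bar f - \epsilon \bar h$ near each $x_0 \in A$ forces $M_h(x_0)$ to annihilate $\ker M_f(x_0)$, imposing finitely many linear conditions on the coefficients of $\bar h$; a careful count comparing these conditions (bounded in terms of $\deg \Delta \le d + e$) against the dimension $d + e + 3 + \lfloor(d+e)/2\rfloor$ of the coefficient space should leave enough freedom to choose a non-proportional $\bar h$, yielding the contradiction.
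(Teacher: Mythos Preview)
Your ``if'' direction, and in the ``only if'' direction the factorization from $\Delta\equiv 0$ together with the derivation of the degree and root conditions on $r$, are essentially correct and parallel the paper's treatment. One small fix: the reason $r\mid c$ is not merely that $c$ vanishes to even order at interior roots of $r$, but that $0\le c\le r$ on $[0,1]$ forces $c$ to vanish at each root of $r$ to at least the \emph{same} multiplicity as in $r$; combined with $\deg c\le\deg r$ this gives $c=\lambda r$.

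The genuine gap is your argument that $\Delta\equiv 0$. The perturbation sketch does not close. First, the condition ``$M_h(x_0)$ annihilates $\ker M_f(x_0)$'' at each $x_0\in A$ is necessary but not sufficient for $\bar f-\epsilon\bar h\ge 0$ near $x_0$: if $v_0$ spans $\ker M_f(x_0)$, the function $x\mapsto v_0^{T}M_f(x)v_0$ vanishes at $x_0$ to some order $k_0\ge 1$ (with $k_0\ge 2$ at interior points), and to have both $\bar f\pm\epsilon\bar h\ge 0$ you need $v_0^{T}M_h(x)v_0$ to vanish to order $\ge k_0$ as well, adding further constraints you have not accounted for. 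Second, even your stated count is not favorable: two linear conditions per point of $A$ with $\#A\le\deg\Delta\le d+e$ gives up to $2(d+e)$ constraints against an ambient dimension $\tfrac{3}{2}(d+e)+3$, which already leaves no room for a non-proportional $\bar h$ once $d+e\ge 4$. Third, you still need the nonlinear positivity constraint $\bar h\in\cC_{d,e}$. The phrase ``should leave enough freedom'' is exactly where the proof is missing.

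The paper takes a completely different route for the ``only if'' direction: induction on $(d,e)$. After a linear change of variables in $(Y,Z)$ (with constant coefficients when $d=e$, or coefficients linear in $X$ when $d+2\le e$) that moves a zero of $\bar f$ on $\cS$ to a convenient position, one of the coefficients $f_i$ acquires a factor $X-x_0$; one then extracts either a square factor (passing to $\cC_{d-2,e}$) or a common linear factor at a boundary point (passing to $\cC_{d-1,e-1}$) and applies the inductive hypothesis. The residual boundary configurations in which no such factor can be extracted are disposed of by explicit, case-specific perturbations rather than by any dimension count.
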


To prove Theorem \ref{th:main}, the idea is to proceed inductively on a 
sequence of cones ordered \emph{by inclusion}. To do so, we need to show first that
given $\bar f=f_2(X)Y^2+f_1(X)YZ+f_0(X)Z^2 \in \cC_{d,e}$
some factors of $f_2(X)$ or $f_0(X)$ are necessarily also factors of $f_1(X)$; 
in this case,
after removing these factors we move to a smaller cone.

The following lemmas are some basic auxiliary results concerning extreme rays of $\cC_{d, e}$.

\begin{lemma} \label{lem:se_anula}
Let $d, e \in \N_0$ and let $\bar f$ be a generator of an extreme ray of $\cC_{d, e}$. 
Then $\bar f$ vanishes at some point of $\cS$. 
\end{lemma}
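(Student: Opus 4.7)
The plan is to prove the contrapositive: if $\bar f$ does not vanish at any point of $\cS$, then the ray it generates is not extreme in $\cC_{d,e}$. This is a standard perturbation argument for closed cones cut out by a continuous non-negativity condition.

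Because $\bar f$ is homogeneous of degree $2$ in $(Y,Z)$, testing non-negativity on $\cS$ is equivalent to testing it on the compact slice $K = \{(x,y,z) \in \R^3 \ | \ x \in [0,1], \, y^2 + z^2 = 1 \}$. Under the assumption that $\bar f > 0$ on $\cS$, continuity and compactness give $\delta := \min_K \bar f > 0$. The ambient vector space $V_{d,e}$ of polynomials of the form $h_2(X)Y^2 + h_1(X)YZ + h_0(X)Z^2$ with the prescribed degree bounds has dimension $(d+1) + (\lfloor(d+e)/2\rfloor+1) + (e+1) \ge 3$, so one can choose $\bar h \in V_{d,e}$ that is not a scalar multiple of $\bar f$. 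Setting $M = \max_K |\bar h|$ and $\epsilon = \delta/(2M)$, both $\bar f \pm \epsilon \bar h$ are bounded below by $\delta/2 > 0$ on $K$, hence non-negative on $\cS$ by homogeneity, so they both lie in $\cC_{d,e}$. The identity
$$
\bar f = \tfrac{1}{2}\bigl((\bar f + \epsilon \bar h) + (\bar f - \epsilon \bar h)\bigr)
$$
then presents $\bar f$ as a sum of two elements of $\cC_{d,e}$, neither of which is a non-negative scalar multiple of $\bar f$ (otherwise $\bar h$ would be proportional to $\bar f$), contradicting the assumption that $\bar f$ generates an extreme ray.

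There is no substantive obstacle here; the argument is the folklore lemma that an extreme ray generator of a closed cone defined by a pointwise non-negativity condition must attain equality somewhere. The only care needed is to replace $\cS$, which is not compact, by the compact $K$ so that a single $\epsilon$ works uniformly for all points, and to observe that $\dim V_{d,e} \ge 3$ guarantees the existence of a perturbation direction $\bar h$ transverse to $\bar f$.
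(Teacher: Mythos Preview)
Your argument is correct and is essentially the same as the paper's: both pass to the compact slice $K=[0,1]\times\{y^2+z^2=1\}$ to get a positive minimum, then exhibit elements of $\cC_{d,e}$ that decompose $\bar f$ nontrivially. The only difference is cosmetic: the paper picks the explicit witnesses $cY^2$ and $c(Y^2+Z^2)$ with $c=\min_K\bar f$ and uses the sandwich $0\le cY^2\le c(Y^2+Z^2)\le\bar f$, whereas you perturb by a generic $\bar h$ not proportional to $\bar f$.
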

\begin{proof}{Proof:}
Suppose $\bar f > 0$ on $\cS$ and take
$$
c = \min\{\bar f(x, y, z) \, | \, x \in [0, 1], \, y^2 + z^2 = 1 \} > 0.
$$
Consider $cY^2$, $c(Y^2+Z^2) \in \cC_{d,e}$. We have 
$$
0 \le cY^2 \le c(Y^2 + Z^2) \le \bar f \ \hbox{ on } \cS,
$$ 
but since $\bar f$ generates an extreme ray of $\cC_{d, e}$, $\bar f$ is a scalar multiple of both $cY^2$ and $c(Y^2 + Z^2)$ which is impossible.
\end{proof}

\begin{lemma} \label{lem:algun_coef_cero}
Let $d, e \in \N_0$ and let 
$\bar f = f_2(X)Y^2 + f_1(X)YZ + f_0(X)Z^2$  be a generator of an extreme ray of $\cC_{d, e}$. 
If 
$f_2 = 0$, $f_1 = 0$ or $f_0 = 0$, then 
$\bar f$ is of the form 
$$
r(X)Y^2 \hbox{ or } r(X)Z^2.
$$
\end{lemma}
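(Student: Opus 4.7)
The plan is to split into the three cases according to which coefficient is assumed to be zero and to observe that two of them are immediate from non-negativity on $\cS$, while only the third genuinely uses the extremality hypothesis.

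First I would handle the case $f_2 = 0$, so that $\bar f = f_1(X)YZ + f_0(X)Z^2$. The idea is to fix an arbitrary $x_0 \in [0,1]$ and specialize at $z = 1$: then for every $y \in \R$ the point $(x_0, y, 1)$ lies in $\cS$, so $f_1(x_0) y + f_0(x_0) \ge 0$ must hold for all $y \in \R$, forcing $f_1(x_0) = 0$. Since $x_0$ was arbitrary, the polynomial $f_1$ vanishes identically, so $\bar f = f_0(X)Z^2$, which is of the stated form with $r = f_0 \ge 0$ on $[0,1]$. The case $f_0 = 0$ is completely symmetric (swap $Y \leftrightarrow Z$) and produces $\bar f = r(X)Y^2$. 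Note that extremality has not been used yet.

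Next I would treat the remaining case $f_1 = 0$, which is the one where extremality really enters. Here $\bar f = f_2(X)Y^2 + f_0(X)Z^2$ with $f_2, f_0 \ge 0$ on $[0,1]$ (from the general observation already recorded in the paper that the ``corner'' coefficients must be non-negative). Assume for contradiction that neither $f_2$ nor $f_0$ is zero. Then the decomposition
$$
\bar f = f_2(X)Y^2 + f_0(X)Z^2
$$
exhibits $\bar f$ as a sum of two nonzero elements of $\cC_{d,e}$: the degree constraints $\deg f_2 \le d$, $\deg f_0 \le e$ are inherited, and the middle-coefficient bound is satisfied trivially since both summands have vanishing $YZ$-coefficient. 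Since these two summands are not positive scalar multiples of one another, this contradicts $\bar f$ lying on an extreme ray of $\cC_{d,e}$. Hence $f_2 = 0$ or $f_0 = 0$, which reduces to the previous case and yields the stated form.

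There is no serious obstacle: the only thing to verify carefully is that the two candidate summands in the $f_1 = 0$ case really do belong to $\cC_{d,e}$ with the prescribed degree bounds, which is automatic. The conceptual point worth highlighting is the asymmetry between the three cases — vanishing of $f_2$ or $f_0$ forces vanishing of $f_1$ by sign considerations alone, whereas vanishing of $f_1$ needs the extremal-ray hypothesis to rule out a genuine convex decomposition.
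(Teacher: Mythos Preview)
Your proof is correct and follows essentially the same approach as the paper. In both treatments, the cases $f_2 = 0$ and $f_0 = 0$ force $f_1 = 0$ by sign considerations alone (the paper refers back to the observation preceding the definition of $\cC_{d,e}$, while you spell out the linear-in-$y$ argument), and the case $f_1 = 0$ with $f_2, f_0 \ne 0$ is handled by the obvious decomposition $\bar f = f_2(X)Y^2 + f_0(X)Z^2$ into cone elements that are not scalar multiples of $\bar f$; the paper phrases this via the sandwich $0 \le f_2(X)Y^2 \le \bar f$ on $\cS$, but this is the same extremality argument.
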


\begin{proof}{Proof:} 
If $f_2 = 0$ then $f_1 = 0$, $\bar f = f_0(X)Z^2$ and we take $r(X) = f_0(X)$. Similarly, if $f_0 = 0$ 
then $f_1 = 0$, $\bar f = f_2(X)Y^2$ and we take $r(X) = f_2(X)$.
On the other hand, if $f_1 = 0$ and $f_2, f_0 \ne 0$, then 
$$
0 \le f_2(X)Y^2 \le f_2(X)Y^2 + f_0(X)Z^2 = \bar f \ \hbox{ on } \cS
$$
which, proceeding similarly to the proof of Lemma \ref{lem:se_anula}, is impossible. 
\end{proof}

The following lemma shows that the second and third condition in the characterization of the extreme rays 
in Theorem \ref{th:main} are indeed consequences of the first condition. 

\begin{lemma} \label{lem:conclu_1_var}
Let $d, e \in \N_0$. 
If  $r(X)(p(X)Y + q(X)Z)^2$ 
with $p$ and $q$ not simultaneously zero and $(p: q) = 1$
generates an extreme ray of $\cC_{d, e}$,
then 
\begin{itemize}
\item $r \ne 0$,  $r \ge 0$ on $[0,1]$ and $r$ has $\deg r$ real roots in $[0, 1]$ (counted with multiplicity), 
\item $2 \deg p \le d, 2 \deg q \le e$ and $\deg r = \min \{d - 2\deg p, e - 2\deg q \}.$
\end{itemize}
\end{lemma}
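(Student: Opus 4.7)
The plan is to verify the two bullet points directly from extremality, decomposing $\bar f=r(X)(p(X)Y+q(X)Z)^2$ as a sum of two non-collinear elements of $\cC_{d,e}$ whenever one of the stated conditions fails. The basic degree inequalities $\deg r+2\deg p\le d$, $\deg r+2\deg q\le e$, and $\deg r+\deg p+\deg q\le\lfloor(d+e)/2\rfloor$ come for free from $\bar f\in\cC_{d,e}$, and in particular $2\deg p\le d$ and $2\deg q\le e$. Since $\bar f$ generates a ray we have $\bar f\ne 0$ and hence $r\ne 0$. Moreover, because $(p:q)=1$, the polynomials $p$ and $q$ have no common real root, so for every $x_0\in[0,1]$ there exists $(y_0,z_0)\ne(0,0)$ with $(p(x_0)y_0+q(x_0)z_0)^2>0$; evaluating $\bar f\ge 0$ at $(x_0,y_0,z_0)\in\cS$ forces $r(x_0)\ge 0$, so $r\ge 0$ on $[0,1]$.

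Next, to see that every root of $r$ is real and lies in $[0,1]$, I argue by contradiction. If $r$ had a non-real root, it would factor as $r=((X-\beta)^2+\gamma^2)s(X)$ with $\gamma\ne 0$; since $(X-\beta)^2+\gamma^2>0$ on $\R$, necessarily $s\ge 0$ on $[0,1]$, and then
$$
\bar f=(X-\beta)^2 s(X)(p(X)Y+q(X)Z)^2+\gamma^2 s(X)(p(X)Y+q(X)Z)^2
$$
exhibits $\bar f$ as a sum of two elements of $\cC_{d,e}$ (the degree bounds are immediate since $\deg((X-\beta)^2 s)=\deg r$ and $\deg s=\deg r-2$) that are clearly not scalar multiples of each other, contradicting extremality. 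If instead $r$ has a real root $\alpha<0$, I write $r=(X-\alpha)s$ with $s\ge 0$ on $[0,1]$ and split $X-\alpha=X+(-\alpha)$ into two parts non-negative on $[0,1]$, producing the analogous two-term decomposition of $\bar f$. The case $\alpha>1$ is symmetric after noting $-s\ge 0$ on $[0,1]$ and $\alpha-X=(\alpha-1)+(1-X)$.

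Finally, to establish $\deg r=\min\{d-2\deg p,\,e-2\deg q\}$, assume for contradiction that $\deg r+1\le\min\{d-2\deg p,\,e-2\deg q\}$. Then the identity
$$
\bar f=X\,r(X)(p(X)Y+q(X)Z)^2+(1-X)\,r(X)(p(X)Y+q(X)Z)^2
$$
splits $\bar f$ into two elements of $\cC_{d,e}$ which are visibly not scalar multiples of each other; the two outside degree bounds are the assumption itself, while the middle bound $\deg r+1+\deg p+\deg q\le\lfloor(d+e)/2\rfloor$ follows by averaging the two outside inequalities and using that the left-hand side is an integer.

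The main obstacle I foresee is purely bookkeeping: checking that each proposed splitting really lands in $\cC_{d,e}$, most delicately on the coefficient of $YZ$, whose allowed degree is the floor $\lfloor(d+e)/2\rfloor$ rather than an exact half. Once this is confirmed in each case, the contradiction with extremality is immediate and the rest of the argument is essentially just factoring in $\R[X]$.
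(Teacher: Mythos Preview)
Your proof is correct and follows essentially the same approach as the paper's: split $r$ into two non-negative pieces of different degrees whenever $r$ has a root outside $[0,1]$, and use multiplication by $X$ (and $1-X$) to contradict extremality when $\deg r$ is too small. The only differences are stylistic---you spell out the explicit decompositions in each case and phrase extremality via ``$\bar f=g_1+g_2$ with $g_1,g_2$ non-collinear'', whereas the paper leaves the splittings as ``easy to see'' and uses the equivalent ``$0\le g\le\bar f$ on $\cS$'' formulation.
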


\begin{proof}{Proof:}
Let $\bar f = r(X)(p(X)Y + q(X)Z)^2$. Since $\bar f \ne 0$, $r \ne 0$, and 
since $\bar f \geq 0$ on $\cS$, $r \geq 0$ on $[0,1]$. If $r$ has a complex non-real root, or a 
real root which does not belong to the
interval $[0,1]$, it is easy to see that $r$ can be written as $r=r_1+r_2$
with $r_1,r_2\in \R[X]-\{0\}$, 
$\deg r_1, \deg r_2 \le \deg r$,
$\deg r_1 \ne \deg r_2$ and
$r_1,r_2 \geq 0$ on $[0,1]$. 
Then for $i = 1, 2$, we take $f_i=r_i(X)(p(X)Y + q(X)Z)^2 \in \cC_{d, e}$ and we have 
$$
0 \le f_i \le \bar f \ \hbox{ on } \cS,
$$
but since $\bar f$ generates an extreme ray of $\cC_{d, e}$, $\bar f$ is a scalar multiple of both $f_1$ and $f_2$ which is impossible.

Since $\bar f \in \cC_{d,e}$, we have 
$2 \deg p \le d, 2 \deg q \le e$ and $\deg r \le \min \{d - 2\deg p, e - 2\deg q \}$.
If $\deg r < \min \{d - 2\deg p, e - 2\deg q \}$, we have 
$X \bar f \in \cC_{d,e}$ and 
$$
0 \le X \bar f \le \bar f \ \hbox{ on } \cS
$$
which is again impossible for similar reasons. 
\end{proof}

In order to prove Theorem \ref{th:main}, we will do several changes of variables. 
The following three
lemmas summarize the properties we need. We omit their proofs since they are very simple.

\begin{lemma} \label{lem:camb_variable}
Let $d, e \in \N_0$ with $d \le e$,  $\bar f \in  \cC_{d, e}$, $\beta \in \R$
and
$h 
\in \R[X, Y, Z]$
defined by 
$$
h(X, Y, Z) =\bar f(X, Y + \beta Z, Z)= f_2(X)Y^2 + h_1(X)YZ + h_0(X)Z^2.
$$
Then:
\begin{itemize}
 \item $h$ belongs to $\cC_{d, e}$. 
 
 \item If $\bar f$ generates an extreme ray of $\cC_{d, e}$, 
 then $h$ generates an extreme ray of $\cC_{d,e}$.
 
 \item If $(x_0, y_0, z_0) \in \cS$ with $z_0 \ne 0$ and $\bar f(x_0, y_0, z_0) = 0$ 
 and $\beta = y_0/z_0$, then $h_0(x_0) = 0$.
 
 \item If $h$ can be written as
 $
 r(X)(p(X)Y + q(X)Z)^2
 $
 with $p$ and $q$ not simultaneously zero and $(p:q)$ $= 1$, 
 then
 $\bar f$ 
 can be written as
 $$
 r(X)(p(X)Y + (-\beta p(X) + q(X))Z)^2
 $$
 with $p$ and $ -\beta p + q$ not simultaneously zero and 
 $(p : - \beta p + q) = 1$. 

\end{itemize}
\end{lemma}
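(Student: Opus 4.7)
The plan is to expand $h$ explicitly by the substitution $Y \mapsto Y + \beta Z$:
$$
h(X,Y,Z) = f_2(X)Y^2 + \bigl(2\beta f_2(X) + f_1(X)\bigr)YZ + \bigl(\beta^2 f_2(X) + \beta f_1(X) + f_0(X)\bigr)Z^2,
$$
from which all four items can be read off. For the first item, non-negativity on $\cS$ follows because the affine map $(x,y,z) \mapsto (x, y+\beta z, z)$ is a bijection of $\cS$ onto itself (if $z = 0$ and $y \ne 0$ then the new second coordinate is $y \ne 0$; otherwise $z \ne 0$ is preserved), so $h \ge 0$ on $\cS$. The three degree bounds on the coefficient polynomials of $h$ drop out of the expansion above using the hypothesis $d \le e$, which yields $d \le \lfloor (d+e)/2 \rfloor \le e$, keeping each coefficient inside its prescribed degree.

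For the second item, the linear substitution $Y \mapsto Y + \beta Z$ induces an automorphism of $\R[X,Y,Z]$ which, by the previous item applied also to its inverse (corresponding to $\beta \mapsto -\beta$, of the same form), restricts to a linear automorphism of $\cC_{d,e}$. Since linear automorphisms of a cone permute its extreme rays, $h$ generates an extreme ray whenever $\bar f$ does. For the third item, one simply evaluates at $(x_0, 0, z_0)$:
$$
h_0(x_0)\, z_0^2 \;=\; h(x_0, 0, z_0) \;=\; \bar f(x_0, \beta z_0, z_0) \;=\; \bar f(x_0, y_0, z_0) \;=\; 0,
$$
and divides by $z_0^2 \ne 0$.

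For the fourth item, invert the substitution as $\bar f(X,Y,Z) = h(X, Y - \beta Z, Z)$; the factorization of $h$ then gives
$$
\bar f(X,Y,Z) \;=\; r(X)\bigl(p(X)Y + (-\beta p(X) + q(X))Z\bigr)^2.
$$
The pair $(p, -\beta p + q)$ cannot both be zero, since otherwise $q = \beta p = 0$, contradicting the hypothesis on $h$. The coprimality $(p : -\beta p + q) = 1$ follows from $\gcd(p, -\beta p + q) = \gcd(p, q) = 1$, because any common divisor of $p$ and $-\beta p + q$ divides $q$ and vice versa. The entire argument is routine; the only place that requires a moment of care is verifying the degree bound for the coefficient of $YZ$ in $h$, where the hypothesis $d \le e$ is used in an essential way.
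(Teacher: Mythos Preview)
Your proof is correct; the paper itself omits the proof of this lemma (and the two companion lemmas), stating only that ``we omit their proofs since they are very simple.'' Your explicit expansion, the observation that the substitution and its inverse both preserve $\cC_{d,e}$ (hence act as a linear automorphism of the cone), the evaluation at $(x_0,0,z_0)$, and the gcd computation constitute exactly the straightforward verification the authors had in mind.
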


\begin{lemma} \label{lem:camb_variable_d_men_e}
Let $d, e \in \N_0$ with $d +2 \le e$,  
$\bar f  \in \cC_{d, e}$, 
$\ell \in \R[X]$ with $\deg \ell = 1$ and 
$h \in \R[X, Y, Z]$ defined by 
$$
h(X, Y, Z) = \bar f(X, Y + \ell(X) Z, Z)
= f_2(X)Y^2 + h_1(X)YZ + h_0(X)Z^2. 
$$
Then: 
\begin{itemize}
\item $h$ belongs to $\cC_{d, e}$.

\item If $\bar f$ generates an extreme ray of $\cC_{d, e}$, 
then  $h$  generates an extreme ray of $\cC_{d, e}$.

\item If $(x_0, y_0, z_0),$ $(x_1, y_1, z_1)
\in \cS$ 
with $x_0 \ne x_1$, $z_0, z_1 \ne 0$, $y_0/z_0 \ne y_1/z_1$
and $f(x_0, y_0, z_0) = f(x_1, y_1, z_1) = 0$ and
$$
\ell(X) = \frac{y_1/z_1 - y_0/z_0}{x_1 - x_0}(X - x_0) + y_0/z_0,
$$
then $h_0(x_0) = h_0(x_1) = 0$. 

\item  If $h$ can be written as 
$
r(X)(p(X)Y + q(X)Z)^2
$
with $p$ and $q$ not simultaneously zero and $(p:q)$ $= 1$, 
then $\bar f$ can be written as
$$
r(X)(p(X)Y + (-\ell(X) p(X) + q(X))Z)^2
$$
with $p$ and $ -  \ell p + q$ not simultaneously zero and 
$(p : - \ell p + q) = 1$. 
\end{itemize}
\end{lemma}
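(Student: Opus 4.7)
The plan is to verify the four bullet points in turn; all four reduce to direct computations once we expand the substitution. The only place that needs any care is the degree bookkeeping, where the hypothesis $d+2\le e$ is essential. Expanding $h(X,Y,Z)=\bar f(X,Y+\ell(X)Z,Z)$, the $Y^2$-coefficient is still $f_2(X)$, and
\[
h_1(X)=2f_2(X)\ell(X)+f_1(X),\qquad h_0(X)=f_2(X)\ell(X)^2+f_1(X)\ell(X)+f_0(X).
\]
Using $\deg\ell=1$ and $d+2\le e$, one checks $\deg h_1\le \max(d+1,\lfloor (d+e)/2\rfloor)=\lfloor (d+e)/2\rfloor$ and $\deg h_0\le \max(d+2,\lfloor (d+e)/2\rfloor+1,e)=e$. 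Non-negativity on $\cS$ is immediate once one observes that if $(x,y,z)\in\cS$ then $(x,y+\ell(x)z,z)\in\cS$ as well: when $z=0$ we have $y\ne 0$, and the new second coordinate is just $y$. Hence $h\in\cC_{d,e}$.

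For the extreme-ray statement, consider the linear map $T_\ell:\R[X,Y,Z]\to\R[X,Y,Z]$ sending $\bar f(X,Y,Z)$ to $\bar f(X,Y+\ell(X)Z,Z)$. Its inverse is $T_{-\ell}$, which by the previous paragraph applied with $-\ell$ in place of $\ell$ also maps $\cC_{d,e}$ into itself. Therefore $T_\ell$ restricts to a linear bijection $\cC_{d,e}\to\cC_{d,e}$, and any such bijection necessarily sends extreme rays to extreme rays; in particular $h=T_\ell(\bar f)$ generates an extreme ray of $\cC_{d,e}$ whenever $\bar f$ does.

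For the vanishing at $x_0,x_1$, I exploit that $\bar f$ is homogeneous of degree $2$ in $(Y,Z)$. Setting $Y=0$ in the defining identity yields $h_0(X)Z^2=\bar f(X,\ell(X)Z,Z)=Z^2\bar f(X,\ell(X),1)$, so $h_0(X)=\bar f(X,\ell(X),1)$. By the choice of $\ell$ we have $\ell(x_i)=y_i/z_i$ for $i=0,1$, and homogeneity gives $\bar f(x_i,y_i/z_i,1)=z_i^{-2}\bar f(x_i,y_i,z_i)=0$.

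Finally, since $\bar f=T_{-\ell}(h)$, the representation $h=r(X)(p(X)Y+q(X)Z)^2$ transports directly:
\[
\bar f(X,Y,Z)=r(X)\bigl(p(X)(Y-\ell(X)Z)+q(X)Z\bigr)^2=r(X)\bigl(p(X)Y+(-\ell(X)p(X)+q(X))Z\bigr)^2.
\]
If $p=0$ then $-\ell p+q=q\ne 0$, so $p$ and $-\ell p+q$ are not both zero; moreover $\gcd(p,-\ell p+q)=\gcd(p,q)=1$, so the coprimality condition $(p:-\ell p+q)=1$ is preserved.
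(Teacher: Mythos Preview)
Your proof is correct. The paper explicitly omits the proof of this lemma (together with Lemmas~\ref{lem:camb_variable} and~\ref{lem:camb_variable_d_ig_e}), remarking only that ``their proofs are very simple''; your verification is exactly the routine computation the authors had in mind, including the observation that the substitution $Y\mapsto Y+\ell(X)Z$ and its inverse both preserve $\cC_{d,e}$ (this is where $d+2\le e$ enters), so that the induced map is a linear automorphism of the cone and hence permutes extreme rays.
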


\begin{lemma} \label{lem:camb_variable_d_ig_e}
Let $d, e \in \N_0$ with $d = e$,  $\bar f \in \cC_{d, e}$,
$\beta_0, \beta_1 \in \R$
with $\beta_0 \ne \beta_1$ and 
$h \in \R[X, Y, Z]$ defined by 
$$
h(X, Y, Z) = f(X, \beta_0 Y + \beta_1 Z, Y + Z) 
= h_2(X)Y^2 + h_1(X)YZ + h_0(X)Z^2. 
$$
Then:

\begin{itemize}

\item $h$ belongs to $\cC_{d, e}$.

\item If $\bar f$ generates an extreme ray of $\cC_{d, e}$, 
then $h$ generates an extreme ray of $\cC_{d, e}$.

\item If $(x_0, y_0, z_0),$
$(x_1, y_1, z_1)
\in \cS$ 
with $z_0, z_1 \ne 0$, $y_0/z_0 \ne y_1/z_1$  and $f(x_0, y_0, z_0) = f(x_1, y_1, z_1) = 0$ and  
$\beta_0 = y_0/z_0$, 
$\beta_1 = y_1/z_1$, then $h_2(x_0) = h_0(x_1) = 0$.

\item If $h$ can be written as 
$
r(X)(p(X)Y + q(X)Z)^2
$
with $p$ and $q$ not simultaneously zero and $(p:q)$ $= 1$, 
then $\bar f$ 
can be written as 
$$
\frac1{(\beta_0 - \beta_1)^2}r(X)((p(X) - q(X))Y + (-\beta_1 p(X) + \beta_0q(X))Z)^2
$$
with $p- q$ and $-\beta_1 p + \beta_0q $ not simultaneously zero and 
$(p- q :-\beta_1 p + \beta_0q ) = 1$. 
\end{itemize}
\end{lemma}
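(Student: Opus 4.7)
My plan rests on two simple observations: the linear substitution $(Y,Z) \mapsto (\beta_0 Y + \beta_1 Z,\, Y+Z)$ is an automorphism of $\R^2$ because its determinant is $\beta_0 - \beta_1 \ne 0$, and every $\bar f \in \cC_{d,e}$ is homogeneous of degree $2$ in $(Y,Z)$. Since the substitution does not involve $X$, the degree bounds on the coefficients of $h$ in $X$ coincide with those of $\bar f$, and these bounds are all equal to $d = e$; the same is true for the inverse substitution $(Y',Z') \mapsto \bigl(\tfrac{Y'-\beta_1 Z'}{\beta_0-\beta_1},\, \tfrac{-Y'+\beta_0 Z'}{\beta_0-\beta_1}\bigr)$, which is of the same type and will be invoked symmetrically.

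For the first item I would check non-negativity on $\cS$ by observing that the substitution sends $\R^2 \setminus \{(0,0)\}$ bijectively to itself, so any $(x,y,z) \in \cS$ is mapped to a point of $\cS$ and $h(x,y,z) = \bar f(x,\beta_0 y + \beta_1 z, y+z) \ge 0$. For the second item I would exploit that the inverse substitution also preserves $\cC_{d,e}$: any decomposition $h = h' + h''$ in $\cC_{d,e}$ pulls back to a decomposition $\bar f = \bar f' + \bar f''$ in $\cC_{d,e}$, and the extremality of $\bar f$ transfers directly to $h$. For the third item, picking off the coefficients gives $h_2(X) = h(X,1,0) = \bar f(X, \beta_0, 1)$ and $h_0(X) = h(X,0,1) = \bar f(X, \beta_1, 1)$; combining homogeneity of $\bar f$ in $(Y,Z)$ with $\beta_i = y_i/z_i$ yields $h_2(x_0) = z_0^{-2}\bar f(x_0,y_0,z_0) = 0$ and $h_0(x_1) = z_1^{-2}\bar f(x_1,y_1,z_1) = 0$.

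For the fourth item I would substitute the inverse map into $h = r(X)(p(X)Y + q(X)Z)^2$ and compute
$$
p(X)Y + q(X)Z = \frac{1}{\beta_0 - \beta_1}\bigl((p(X) - q(X))Y' + (-\beta_1 p(X) + \beta_0 q(X))Z'\bigr);
$$
squaring then produces the claimed expression for $\bar f$ with the prefactor $\tfrac{1}{(\beta_0-\beta_1)^2}$. Coprimality of $p-q$ and $-\beta_1 p + \beta_0 q$ follows from invertibility of the coefficient matrix $\left(\begin{smallmatrix} 1 & -1 \\ -\beta_1 & \beta_0 \end{smallmatrix}\right)$ over $\R[X]$: any common divisor of these two polynomials would also divide $p$ and $q$ via the inverse transformation, and the same reasoning rules out simultaneous vanishing. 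No real obstacle is anticipated; the only point requiring care is to use the hypothesis $\beta_0 \ne \beta_1$ consistently, since it is precisely what makes the change of variables an isomorphism and hence what drives every part of the argument.
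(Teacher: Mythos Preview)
Your proposal is correct. The paper explicitly omits the proof of this lemma (along with Lemmas \ref{lem:camb_variable} and \ref{lem:camb_variable_d_men_e}), stating only that ``we omit their proofs since they are very simple''; your argument supplies precisely the routine verification the authors had in mind, relying on the invertibility of the linear change of $(Y,Z)$-coordinates (guaranteed by $\beta_0 \ne \beta_1$) together with the fact that for $d = e$ all three coefficient degree bounds coincide.
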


We are ready to prove the characterization of the extreme rays of the 
cone $\cC_{d, e}$ given in Theorem \ref{th:main}.

\begin{proof}{Proof of Theorem \ref{th:main}:}
We begin by proving that if $\bar f = r(X)(p(X)Y + q(X)Z)^2$ 
with $r, p$ and $q$ as in the statement of Theorem \ref{th:main}, then 
$\bar f$ generates an extreme ray of $\cC_{d,e}$.
Consider
$$
g = g_2(X)Y^2 + g_1(X)YZ + g_0(X)Z^2 \in \cC_{d, e}
$$
such that $0 \le g \le \bar f$ on $\cS$. We want to show that $g$ is a scalar multiple of $\bar f$. 

If $p=0$, since $(p:q) = 1$ we have $q = \lambda \in \R\setminus\{0\}$ and  then $\deg r = e$. On the other hand, 
for every $x \in [0,1]$, $\bar f(x,1,0)=0$. Then, for every $x \in [0,1]$, $g_2(x)=g(x,1,0)=0$
and this implies $g_2 = g_1=0$. Therefore, $g=g_0(X)Z^2$, but 
since $0 \le g \le \bar f$ on $\cS$, $0 \leq g_0 \leq  \lambda^2 r $ on $[0,1]$. It is easy to see  
that every root of $r$ is necessarily also a root of $g_0$ with at least the same multiplicity, then we have
$\deg r \le \deg g_0 \leq e=\deg r$, $g_0$ is a scalar multiple of $r$ and $g$ is a scalar multiple of $\bar f$.

If $p \neq 0$, we consider $G \in \R[X,Y,Z]$ defined by
$$
G(X,Y,Z)=p(X)^2g(X,Y,Z)=g_2(X)(p(X)Y+q(X)Z)^2+G_1(X)YZ+G_0(X)Z^2.
$$
We first see that $G_1=G_0=0$. 
Take $x_0 \in [0,1]$ such that $p(x_0)\neq 0$. Since $\bar f(x_0, -q(x_0), p(x_0))=0$, 
$G(x_0, -q(x_0), p(x_0))=0$ and then 
\begin{align}\label{id_G_1}
-G_1(x_0)q(x_0)p(x_0)+G_0(x_0)p(x_0)^2=0.
\end{align}
Moreover, since $G \geq 0$ on $\cS$, 
\begin{align}\label{id_G_0}
\frac{\partial G}{\partial Y}(x_0, -q(x_0), p(x_0))= G_1(x_0)p(x_0)=0.
\end{align}
We conclude from (\ref{id_G_1}) and (\ref{id_G_0}) that $G_1(x_0)= G_0(x_0)=0$. This implies $G_1=G_0=0$
and then $p(X)^2g(X,Y,Z)=g_2(X)(p(X)Y+q(X)Z)^2$. Since $(p:q)=1$, $p^2 \, | \,  g_2$ and $g=\tilde g_2(X)(p(X)Y+q(X)Z)^2$
for $\tilde g_2 = g_2/p^2 \in \R[X]$.
Reasoning similarly to the case $p =0$, we see that $\tilde g_2$ is a scalar multiple of $r$ and $g$
is a scalar multiple of $\bar f$.

Now we prove that if $\bar f = f_2(X)Y^2 + f_1(X)YZ + f_0(X)Z^2$ generates an extreme ray of $\cC_{d,e}$ then 
$\bar f$ can be written as in the statement of Theorem \ref{th:main}. To do so, 
we use inductive arguments, considering the families of cones ordered \emph{by inclusion}, this is 
to say, 
$$
\cC_{d_1, e_1} \le \cC_{d_2, e_2} \qquad \text{ if } \qquad d_1 \le d_2\text{ and } e_1 \le e_2.
$$
Actually, for $(d,e)=(0,0)$, the result is easy to check using Lemma \ref{lem:se_anula}, so from now on
we assume $(d, e) \ne (0, 0)$. 
Using Lemma \ref{lem:algun_coef_cero} and Lemma \ref{lem:conclu_1_var}, we can assume $f_2, f_1, f_0 \ne 0$. 

First, we prove the result in two particular cases. 

\begin{enumerate}

\item[A1.]  There is $x_0 \in [0, 1]$ such that $(X-x_0)^2 \, | \, f_2$ or $(X-x_0)^2 \, | \, f_0$:

Without loss of generality, suppose $(X-x_0)^2 \, | \, f_2$, then $X-x_0 \, | \, f_1$.
Consider $h_2 = f_2/(X-x_0)^2, \, h_1 = f_1/(X-x_0) \in \R[X]$  and
$$
h = h_2(X)Y^2 + h_1(X)YZ + f_0(X)Z^2 \in \R[X, Y, Z],
$$
then 
$$
h(X, (X-x_0)Y, Z) = \bar f(X, Y, Z) \qquad \hbox{ and } \qquad
h(X, Y, Z) =\bar  f\left(X, \frac{Y}{X-x_0}, Z\right)
$$

Note that $h \in \cC_{d-2, e}$. Indeed, 
$h$ verifies the degree bounds
and 
$h \ge 0$ on  $\{(x, y, z) \in  \cS \ | \ x\ne x_0 \}$,
by continuity,  $h \ge 0$ on $\cS$.  
In order to apply the inductive hypothesis, let us prove that $h$ generates an extreme ray of $\cC_{d-2, e}$. 
Given
$$
g = g_2(X)Y^2 + g_1(X)YZ + g_0(X)Z^2 \in \cC_{d-2, e}
$$ 
such that $0 \le g \le h$ on $\cS$,  we consider
$$
\tilde g = (X-x_0)^2g_2(X)Y^2 + (X-x_0)g_1(X)YZ + g_0(X)Z^2 \in \R[X, Y, Z],
$$ 
since $\tilde g(X, Y, Z) = g(X, (X-x_0)Y, Z)$, $\tilde g \in \cC_{d,e}$ and $0 \le \tilde g \le \bar f$ on $\cS$. 
Therefore, $\tilde g$ is a scalar multiple of $\bar f$ and $g$ is a scalar multiple of $h$. 

By the inductive hypothesis, $h$ is of the form
$$
h(X, Y, Z) = \tilde r(X)(\tilde p(X)Y + \tilde q(X)Z)^2
$$
with $\tilde p$ and $\tilde q$ not simultaneously zero and $(\tilde p: \tilde q) = 1$. 
Then, 
$$
\bar f(X, Y, Z)  = \tilde r(X)((X-x_0) \tilde p(X)Y + \tilde q(X)Z)^2.
$$
If $X - x_0 \, \not| \, \tilde q$, we take $r = \tilde r$, $p = (X-x_0) \tilde p$ and $q = \tilde q$, and 
if $X - x_0 \, | \, \tilde q$, we take $r = (X-x_0)^2 \tilde r$, $p = \tilde p$ and $q = \tilde q /(X-x_0) \in \R[X]$.
In both cases we have $(p:q) = 1$ and we conclude using Lemma \ref{lem:conclu_1_var}.

\item[A2.] There is $x_0 \in [0, 1]$ such that $X-x_0 \, | \, f_2,  f_0$:

It is clear that $X-x_0 \, | \, f_1$.
If $x_0 \in (0, 1)$ it is easy to see that  $(X-x_0)^2 \, | \, f_2$ and then we are in case A1, 
so we can suppose $x_0 \in \{0, 1\}$. Without loss of generality assume $x_0 = 0$.
Consider $h = \bar f/X \in \R[X, Y, Z]$. 
Proceeding as in case A1, it is easy to see that $h$ generates an extreme ray of $\cC_{d-1, e-1}$, 
and using the inductive hypothesis we have $h$ is of the form
$$
h(X, Y, Z) = \tilde r(X)(\tilde p(X)Y + \tilde q(X)Z)^2
$$
with $\tilde p$ and $\tilde q$ not simultaneously zero and $(\tilde p: \tilde q) = 1$. 
Then 
we take $r = X\tilde r$, $p = \tilde p$ and $q = \tilde q$ and we conclude using Lemma \ref{lem:conclu_1_var}.
\end{enumerate}

We consider now an auxiliary list of cases in which we prove the result by reducing to cases A1 and A2.

\begin{enumerate}

\item[B1.] There are $x_0 \in \{0,1\}$ and $(y_0,z_0) \in \{(1,0), (0,1)\}$ such that
$\bar f(x_0,y_0,z_0)=0$ and $\bar f(x, y, z) \ne 0$ for every $(x, y , z) \in \cS$ with $x \ne x_0$:

Without loss of generality, suppose $\bar f(0,1,0)=0$, then $f_2(0) = 0$ and $X \, | \, f_1$.
If $X^2 \, | \, f_2$ we are in case A1 and if $X \, | \, f_0$ we are in case A2.  
Moreover, if 
there is $x \in (0, 1]$ with $f_2(x) = 0$, then 
$\bar f(x, 1, 0) = 0$ which  contradicts the hypothesis. 
Similarly, if 
there is $x \in (0, 1]$ with $f_0(x) = 0$, then 
$\bar f(x, 0, 1) = 0$ which also contradicts the hypothesis. 
So from now on we assume $X^2\nmid f_2$, $f_2 > 0$ on $(0, 1]$ and $f_0 > 0$ on $[0, 1]$.

Consider $g_2 = f_2/X, g_1 = f_1/X \in \R[X]$
and note that $g_2 > 0$ in $[0, 1]$.
Since $\bar f(x,y,z) > 0$ for $(x,y,z) \in \cS$ with $x \in (0,1]$, 
$$
f_1(x)^2-4f_2(x)f_0(x)=  x^2g_1^2(x) - 4xg_2(x)f_0(x) < 0, 
$$
for $x \in (0, 1]$, and then 
$$
 xg_1^2(x) - 4g_2(x)f_0(x) < 0
$$
for  $x \in (0, 1]$, but since $g_2(0) > 0$ and $f_0(0) >0$, this last inequality can be extended to $x \in [0, 1]$. 
We take $\varepsilon > 0$ such that 
$$
\frac{xg_1^2(x)}{4g_2(x)} - f_0(x) \le -\varepsilon
$$
for $x \in [0,1]$.
Therefore,
$$
f_1(x)^2-4f_2(x)(f_0(x) - \varepsilon)=  x^2g_1^2(x) - 4xg_2(x)(f_0(x) - \varepsilon) \le 0
$$
for $x \in [0, 1]$.
Let
$
h = f_2(X)Y^2 + f_1(X)YZ + (f_0(X) - \varepsilon)Z^2 \in \R[X,Y,Z].
$
It follows easily that $h \in \cC_{d,e}$ and  $0 \le h \le \bar f$ on $\cS$, 
but then $h$ is a scalar multiple of $\bar f$ which is impossible. 

\item[B2.] There is $(y_0,z_0) \in \{(1,0), (0,1)\}$ such that
$\bar f(0,y_0,z_0) =\bar f(1, y_0,z_0) = 0$ and $\bar f(x, y, z) \ne 0$ for every $(x, y , z) \in \cS$ with $x \in (0, 1)$:

Without loss of generality, suppose $\bar f(0, 1, 0) =\bar f(1, 1, 0) = 0$, then $f_2(0) = f_2(1) = 0$
and therefore $X \, | \, f_1$ and  $X-1 \, | \, f_1$.
If $X^2 \, | \, f_2$ or $(X-1)^2 \, | \, f_2$ we are in case A1 and  
if $X \, | \, f_0$ or $X - 1 \, | \, f_0$ we are in case A2. 
Moreover, if 
there is $x \in (0, 1)$ with $f_2(x) = 0$, 
then $\bar f (x, 1, 0) = 0$ which contradicts the hypothesis. 
Similarly, if
there is $x \in (0, 1)$ with $f_0(x) = 0$, then $\bar f (x, 0, 1) = 0$ which also contradicts the hypothesis. 
So from now on we assume $X^2 \nmid f_2$, $(X-1)^2 \nmid f_2$, $f_2 > 0$ on $(0, 1)$ and $f_0 > 0$ on $[0, 1]$.

Consider $g_2 = f_2/(X(X-1)), g_1 = f_1/(X(X-1)) \in \R[X]$ 
and note that $g_2 < 0$ in $[0, 1]$.
Since $\bar f(x,y,z) > 0$ for $(x,y,z) \in \cS$ with $x \in (0,1)$, 
$$
f_1(x)^2-4f_2(x)f_0(x)=  x^2(x-1)^2g_1^2(x) - 4x(x-1)g_2(x)f_0(x) < 0, 
$$
for $x \in (0, 1)$, and then
$$
 x(x-1)g_1^2(x) - 4g_2(x)f_0(x) > 0
$$
for  $x \in (0, 1)$, but since $g_2(0) <0, g_2(1) < 0, f_0(0) >0$ and $f_0(1)>0$, 
this last inequality can be extended to $x \in [0,1]$. 
We take $\varepsilon > 0$ such that 
$$
\frac{x(x-1)g_1^2(x)}{4g_2(x)} - f_0(x) \le -\varepsilon
$$
for $x \in [0,1]$.
The proof is finished using the same arguments as in case B1.

\item[B3.] There are $(y_0,z_0), (y_1,z_1) \in \{(1,0), (0,1)\}$, $(y_0,z_0)\neq (y_1,z_1)$  such that
$\bar f(0, y_0,z_0) =\bar f(1, y_1,z_1) = 0$  and $\bar f(x, y, z) \ne 0$ for every $(x, y , z) \in \cS$ with $x \in (0, 1)$:

Without loss of generality, suppose $f(0, 1, 0) = f(1, 0, 1) = 0$, then $f_2(0) = f_0(1) = 0$
and therfore $X \, | \, f_1$ and 
$X-1 \, | \, f_1$.
If  $X^2 \, | \, f_2$ or $(X-1)^2 \, | \, f_0$ we are in case A1
and if  $X \, | \, f_0$ or $X-1 \, | \, f_2$ we are in case A2.
Moreover, if 
there is $x \in (0, 1)$ with $f_2(x) = 0$, 
then $\bar f (x, 1, 0) = 0$ which contradicts the hypothesis. 
Similarly, if
there is $x \in (0, 1)$ with $f_0(x) = 0$, then $\bar f (x, 0, 1) = 0$ which also contradicts the hypothesis. 
So from now on we assume $X^2 \nmid f_2$, $(X-1)^2 \nmid f_0$, $f_2 > 0$ on $(0, 1]$ and $f_0 > 0$ on $[0, 1)$.

Consider $g_2 = f_2/X$, $g_1 = f_1/(X(X-1))$, $g_0 = f_0/(X-1) \in \R[X]$
and note that $g_2 > 0$ in $[0, 1]$ and $g_0 < 0$ in $[0, 1]$.
Since $\bar f(x,y,z)>0$ for $(x,y,z) \in \cS$ with $x \in (0,1)$, 
$$
f_1(x)^2-4f_2(x)f_0(x)=  x^2(x-1)^2g_1^2(x) - 4x(x-1)g_2(x)g_0(x) < 0 
$$
for $x \in (0, 1)$, and then
$$
 x(x-1)g_1^2(x) - 4g_2(x)g_0(x) > 0
$$
for  $x \in (0, 1)$, but since $g_2(0) >0, g_2(1) > 0, g_0(0) <0$ and $g_0(1) < 0$, this last inequality can 
be extended to $x \in [0, 1]$. 
We take $\varepsilon > 0$ such that 
$$
\frac{x(x-1)g_1^2(x)}{4g_2(x)} - g_0(x) \ge \varepsilon
$$
for $x \in [0,1]$.
Therefore,
$$
f_1(x)^2-4f_2(x)(x-1)(g_0(x) + \varepsilon)=  x^2(x-1)^2g_1^2(x) - 4x(x-1)g_2(x)(g_0(x) + \varepsilon) \le 0
$$
for $x \in [0, 1]$.
Let
$
h = f_2(X)Y^2 + f_1(X)YZ + (X-1)(g_0(X) + \varepsilon)Z^2 \in \R[X,Y,Z].
$
It follows easily that $h \in \cC_{d,e}$ and  $0 \le h \le \bar f$ on $\cS$, but 
then $h$ is a scalar multiple of $\bar f$ which is impossible. 

\end{enumerate}

We prove now the general case. 
Without loss of generality we suppose $d \le e$.
By Lemma \ref{lem:se_anula}, $\bar f$ vanishes at some point of $\cS$. 
To prove the result we are going to consider three final cases.

\begin{enumerate}

\item[C1.] There is $(x_0, y_0, z_0)  \in \cS$ with $x_0 \in (0, 1)$ such that 
$\bar f(x_0, y_0, z_0) = 0$: 

If $z_0 = 0$, $X - x_0 \, | \, f_2$, then $(X - x_0)^2 \, | \, f_2$ and we are in case A1. 
If $z_0 \ne 0$ we take $\beta = y_0/z_0$ and 
consider
$$
h(X,Y,Z) =\bar f(X, Y + \beta Z, Z) =  f_2(X)Y^2 + h_1(X)YZ + h_0(X)Z^2. 
$$
By Lemma \ref{lem:camb_variable}, $h$ generates an extreme ray of $\cC_{d, e}$ and verifies $h_0(x_0) = 0$.
Then $(X - x_0)^2 \, | \, h_0$ and by case A1 applied to $h$ and 
Lemma \ref{lem:camb_variable} the result follows.

\item[C2.] There are $x_0 \in \{0,1\}$ and  $(y_0, z_0) \in  \cS$ such that 
$\bar f(x_0, y_0, z_0) = 0$ and $\bar f(x, y, z) \ne 0$ for every $(x, y, z) \in \cS$  with $x \ne x_0$:

Without loss of generality, suppose $x_0=0$.
If $z_0 = 0$, we can assume $y_0 = 1$ and we are in case B1. 
If $z_0 \ne 0$, we take $\beta = y_0/z_0$ and consider
$$
h(X,Y,Z) =\bar f(X, Y + \beta Z, Z) =  f_2(X)Y^2 + h_1(X)YZ + h_0(X)Z^2. 
$$
By Lemma \ref{lem:camb_variable},
$h$
generates an extreme ray of $\cC_{d, e}$ and verifies $h_0(0) = 0$ and $h(0, 0, 1) = 0$. 
In addition, 
$h(x, y, z) \ne 0$ for every $(x, y, z) \in \cS$ with $x \ne 0$. 
By case B1 applied to $h$ and 
Lemma \ref{lem:camb_variable} the result follows.

\item[C3.] There are $(y_0,z_0), (y_1,z_1) \in  \cS$ such that 
$\bar f(0, y_0, z_0) =\bar f(1, y_1, z_1)= 0$ 
and $\bar f(x, y, z) \ne 0$ for every $(x, y, z) \in \cS$  with $x \in (0, 1)$:

If $z_0 = z_1= 0$, we can assume $y_0 = y_1 = 1$  and we are in case B2. 

If $z_0 \ne 0$ and $z_1 =  0$, we take $\beta = y_0/z_0$
and consider
$$
h(X,Y,Z) = \bar f(X, Y + \beta Z, Z) =  f_2(X)Y^2 + h_1(X)YZ + h_0(X)Z^2. 
$$
By  Lemma \ref{lem:camb_variable},
$h$
generates an extreme ray of $\cC_{d, e}$ and verifies $h_0(0) = 0$ and $h(0, 0, 1) = 0$. 
On the other hand,
since $\bar f(1, y_1, 0) = 0$, $f_2(1) = 0$ and   
$h(1, 1, 0) = 0$. In addition,
$h(x, y, z) \ne 0$ for every $(x, y, z) \in \cS$ with $x \in (0,1)$. 
By case B3 applied to $h$ 
and 
Lemma \ref{lem:camb_variable} the result follows.
If $z_0 = 0$ and $z_1 \ne 0$ we proceed similarly to the case $z_0 \ne 0$ and $z_1 = 0$.  

The final case is $z_0,z_1 \ne 0$, but we need to split it in three cases. 

If $z_0, z_1 \ne 0$ and $y_0/z_0 = y_1/z_1$, 
we take $\beta = y_0/z_0$ and consider
$$
h(X,Y,Z) =\bar f(X, Y + \beta Z, Z) =  f_2(X)Y^2 + h_1(X)YZ + h_0(X)Z^2. 
$$
By Lemma \ref{lem:camb_variable},
$h$ generates an extreme ray of $\cC_{d, e}$ and verifies $h_0(0) = h_0(1) = 0$, then
$h(0, 0, 1) = h(1, 0, 1) = 0$. 
In addition, 
$h(x, y, z) \ne 0$ for every $(x, y, z) \in \cS$ with $x \in (0,1)$.
By case B2 applied to $h$ and 
Lemma \ref{lem:camb_variable} the result follows.

If $z_0, z_1 \ne 0$ with $y_0/z_0 \ne y_1/z_1$ and $d = e$, 
we take $\beta_0 = y_0/z_0$ and $\beta_1 = y_1/z_1$
and consider
$$
h(X,Y,Z) =\bar f(X, \beta_0 Y + \beta_1 Z, Y + Z) =  h_2(X)Y^2 + h_1(X)YZ + h_0(X)Z^2. 
$$
By Lemma \ref{lem:camb_variable_d_ig_e},
$h$ generates an extreme ray of $\cC_{d, e}$ and verifies $h_2(0) = h_0(1) = 0$, then 
$h(0, 1, 0) = h(1, 0, 1) = 0$.
In addition, 
$h(x, y, z) \ne 0$ for every $(x, y, z) \in \cS$ with $x \in (0,1)$.
By case B3 applied to $h$ and 
Lemma \ref{lem:camb_variable_d_ig_e} the result follows.

Finally, 
if $z_0, z_1 \ne 0$ with $y_0/z_0 \ne y_1/z_1$ 
and $d < e$, since $d\equiv e(2)$, $d +2 \le e$.
Then, we take
$$
\ell(X) = (y_1/z_1 - y_0/z_0)X + y_0/z_0
$$
and consider 
$$
h(X,Y,Z) =\bar f(X, Y + \ell(X) Z, Z) =  f_2(X)Y^2 + h_1(X)YZ + h_0(X)Z^2. 
$$
By Lemma \ref{lem:camb_variable_d_men_e}, $h$
generates an extreme ray of $\cC_{d, e}$ and verifies $h_0(0) = h_0(1) = 0$, 
then, $h(0, 0, 1) = h(1, 0, 1) = 0$.
In addition, 
$h(x, y, z) \ne 0$ for every $(x, y, z) \in \cS$ with $x \in (0,1)$.
By case B2 applied to $h$ and 
Lemma \ref{lem:camb_variable_d_men_e} the result follows.
\end{enumerate}
\end{proof}

Finally, we deduce Theorem \ref{th:main_degY_2}. 

\begin{proof}{Proof of Theorem \ref{th:main_degY_2}:}
Take $d = e = \deg_X f$, then $\bar f = f_2(X) Y^2 + f_1(X) YZ + f_0(X)Z^2 \in \cC_{d, e}$
(note that we \emph{homogenize to degree} 2 even in the case $\deg_Y f = 0$). 
By Theorems \ref{th:extr_rays} and \ref{th:main},
$$
\bar f= \sum_{1 \leq i \leq s}r_i(p_iY+q_iZ)^2 
$$
for some $r_i,p_i,q_i\in \R[X]$  as in Theorem \ref{th:main} for $1 \le i \le s$.
By studying the factorization in $\C[X]$ of each $r_i \in \R[X]$, it is easy to see that 
the condition $r_i \ge 0$ on $[0, 1]$ implies that there exist
$t_i, u_i, v_i, w_i \in \sum \R[X]^2$
such that 
$$
r_i = t_i + u_iX + v_i(1-X) + w_iX(1-X)
$$
with $\deg t_i, \deg u_iX , \deg v_i(1-X), \deg  w_iX(1-X) \le \deg r_i$. 
Using the identities
$$
X = X^2 + X(1-X)
\qquad \hbox{ and } \qquad
1-X = (1-X)^2 + X(1-X),
$$
we take 
$$\sigma_0 = \sum_{1 \leq i \leq s}(t_i +   u_iX^2 + v_i(1-X)^2 )(p_iY+q_i)^2$$
and
$$\sigma_1 = \sum_{1 \leq i \leq s}(u_i  + v_i +  w_i) (p_iY+q_i)^2$$
and the identity $f = \sigma_0 + \sigma_1X(1-X)$ holds. 
Finally, 
$$
\deg (\sigma_0) \le \max_{1 \le i \le s} \deg(t_i +   u_iX^2 + v_i(1-X)^2 )(p_iY+q_i)^2 
\le \max_{1 \le i \le s} \deg  r_i(p_iY+q_i)^2 + 1 \le \deg_X f + 3
$$
and 
$$
\deg (\sigma_1X(1-X)) \le \max_{1 \le i \le s} \deg(u_i  + v_i +  w_i) (p_iY+q_i)^2X(1-X)
\le 
$$
$$
\le \max_{1 \le i \le s} \deg  r_i(p_iY+q_i)^2  +1 \le \deg_X f + 3.
$$
\end{proof}

\section{A constructive approach}\label{sec:al_app}

In this section
we show, under certain hypothesis, a constructive approach which also provides a degree bound for each term in the 
representation in Theorem \ref{th:Marshall}.
This approach works in the case that $f$ is positive on the strip and 
fully $m$-ic on $[0, 1]$ (Section \ref{subsect:positiv}) 
and in the case that $f$ is non-negative on the strip, fully $m$-ic on $[0, 1]$, and has only a finite 
number of zeros, all of them lying on the boundary of the strip and
such that $\frac{\partial f}{\partial x}$ does not vanish at any of them (Section \ref{subsect:zeros}). 
Finally, we will see in Example \ref{ex:caso_malo} that 
this approach does not work in the general case.

Roughly speaking, the main idea is to lift the interval $[0, 1]$ to the standard 1-dimensional simplex 
$$
\Delta_1 = \{(w, x) \in \R^2 \ | \ w \ge 0, \, x \ge 0, \, w+x = 1\},
$$
to consider $Y$ as a parameter and to produce for each evaluation of $Y$ 
a certificate of non-negativity on $\Delta_1$ 
using the effective version of
P\'olya's Theorem from \cite{Polya_bound} in a suitable manner so that these certificates can be glued together. 
We introduce a variable $W$ which is used to lift the interval $[0, 1]$ to the simplex $\Delta_1$ and, as before, a variable 
$Z$ which is used to compactify $\R$.

\begin{notn}
Given
$$
f= \sum_{0 \leq i \leq m} \sum_{0 \leq j \leq d} a_{ji}X^jY^i \in \R[X,Y],
$$
define
$$
F = \sum_{0 \leq i \leq m} \sum_{0 \leq j \leq d} a_{ji}X^j (W+X)^{d-j} Y^iZ^{m-i}
\in \R[W,X,Y,Z].
$$
For $N \in \N_0$ and $0 \le j \le N+d$, we define the polynomials $b_j \in \R[Y, Z]$ as follows:
\begin{equation} \label{eq:reemplazo_fund}
(W+X)^N F= \sum_{0 \leq j \leq N+d}b_j (Y,Z) W^j X^{N+d-j}.
\end{equation}
 \end{notn}
Note that  $(W+X)^NF$
is homogeneous on $(W,X)$ and $(Y,Z)$ of degree $N+d$ and $m$ respectively. 
Therefore, for $0 \le j \le N+d$, $b_j \in \R[Y, Z]$ is a homogeneous polynomial of degree $m$. 

We introduce the notation 
$$
C = \{(y, z) \in \R^2 \ | \ y^2 + z^2 = 1\}.
$$

\begin{proposition}\label{prop:metodo_gral} Let $f \in \R[X, Y]$ and 
$N \in \N_0$ such that for  $0 \le j \le N+d$, $b_j \ge 0$ on $C$.
Then $f$ can be written as in (\ref{repr_f}) with
$$
\deg(\sigma_0), \deg(\sigma_1 X(1-X)) \leq N + d + m +1.
$$
\end{proposition}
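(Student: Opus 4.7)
The plan is to use the defining identity for the $b_j$'s by specializing the auxiliary variables $W$ and $Z$, and then convert the resulting expression into the desired form using two classical pieces of elementary sum-of-squares algebra.

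First I would evaluate the identity $(W+X)^N F = \sum_{j} b_j(Y,Z) W^j X^{N+d-j}$ at $W = 1-X$ and $Z=1$. From the definition of $F$, substituting $W = 1-X$ makes the factor $(W+X)^{d-j}$ collapse to $1$, and $Z = 1$ kills the powers of $Z$, so that $F(1-X, X, Y, 1) = f(X,Y)$. Together with $(W+X)^N\big|_{W=1-X}=1$, this yields the key identity
$$
f(X, Y) = \sum_{0 \le j \le N+d} b_j(Y, 1)\,(1-X)^j X^{N+d-j}.
$$

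Next I would analyze each $b_j(Y, Z)$. It is homogeneous of degree $m$ in $(Y, Z)$. If $m$ were odd, antipodal symmetry combined with $b_j \ge 0$ on $C$ would force $b_j \equiv 0$ and the result would be trivial, so I may assume $m$ is even. In that case non-negativity on $C$ extends by homogeneity to all of $\R^2$, and the classical theorem that a non-negative binary form is a sum of squares of binary forms of half the degree gives $b_j(Y, Z) = \sum_k h_{j,k}(Y, Z)^2$ with each $h_{j,k}$ homogeneous of degree $m/2$. Specializing at $Z = 1$ shows that $b_j(Y, 1)$ is a sum of squares in $\R[Y]$ of degree at most $m$.

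Then I would rewrite each monomial $(1-X)^j X^{N+d-j}$ as $\tau_j(X) + \rho_j(X)\,X(1-X)$ with $\tau_j,\rho_j \in \sum\R[X]^2$. When $j$ and $N+d-j$ have the same parity this is immediate: either the monomial is already a square, or a square times $X(1-X)$. When they differ in parity I would apply one of the identities $X = X^2 + X(1-X)$ or $1-X = (1-X)^2 + X(1-X)$ to the factor with odd exponent to raise it by one; a routine case analysis shows $\deg \tau_j \le N+d+1$ and $\deg \rho_j X(1-X) \le N+d$ in every case.

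Finally I would combine everything: setting
$$
\sigma_0 = \sum_j b_j(Y,1)\,\tau_j(X), \qquad \sigma_1 = \sum_j b_j(Y,1)\,\rho_j(X),
$$
each summand is the product of a sum of squares in $\R[Y]$ and one in $\R[X]$, hence a sum of squares in $\R[X,Y]$. The identity $f = \sigma_0 + \sigma_1 X(1-X)$ follows from the first display, and the degree bounds
$\deg \sigma_0 \le m + N + d + 1$ and $\deg \sigma_1 X(1-X) \le m + N + d$ are immediate. The proof is essentially a careful bookkeeping exercise; the only subtle point is Step 2, where one must correctly translate non-negativity of each homogeneous $b_j$ on the circle $C$ into a sum-of-squares decomposition of $b_j(Y,1)$ of the right degree, and this is handled by Hilbert's classical result on binary forms.
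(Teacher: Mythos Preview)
Your proof is correct and follows essentially the same route as the paper: specialize $W=1-X$, $Z=1$ to get $f=\sum_j b_j(Y,1)(1-X)^jX^{N+d-j}$, write each $b_j(Y,1)$ as a sum of squares in $\R[Y]$ of degree at most $m$, and use the identities $X=X^2+X(1-X)$ and $1-X=(1-X)^2+X(1-X)$ to sort the $X$-monomials into $\sigma_0$ and $\sigma_1$.

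One minor bookkeeping slip: your claim that $\deg\big(\rho_j\,X(1-X)\big)\le N+d$ in every case is not quite right. When $N+d$ is odd and, say, $j$ is even, applying $X=X^2+X(1-X)$ gives $\tau_j=(1-X)^jX^{N+d-j+1}$ and $\rho_j=(1-X)^jX^{N+d-j-1}$, so $\deg\big(\rho_j\,X(1-X)\big)=N+d+1$, not $N+d$. This does not harm the argument, since the proposition only asks for the bound $N+d+m+1$, which still holds. (The paper likewise obtains $N+d+m$ when $N+d$ is even and $N+d+m+1$ when $N+d$ is odd.)

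A small simplification for your Step~2: rather than invoking Hilbert's theorem on binary forms, you can argue directly that $b_j(Y,1)\ge 0$ on $\R$ by homogeneity (for any $y$, $b_j(y,1)=(y^2+1)^{m/2}\,b_j\big(\tfrac{y}{\sqrt{y^2+1}},\tfrac{1}{\sqrt{y^2+1}}\big)\ge 0$ when $m$ is even, and $b_j\equiv 0$ when $m$ is odd), and then use that every non-negative univariate polynomial is a sum of squares. This is exactly what the paper does and avoids the detour through binary forms.
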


\begin{proof}{Proof:}
Substituting $W = 1-X$ and $Z = 1$ in  (\ref{eq:reemplazo_fund}) we have
$$
f(X, Y) =\sum_{0 \leq j \leq N+d}b_j (Y,1) (1-X)^j X^{N+d-j}.
$$
For $0 \le j \le N + d$, 
since $b_j(Y, Z) \ge 0$ on $C$ and $b_j$ is homogeneous, 
we have $b_j(Y,1) \geq 0$ on $\R$ and therefore  
$b_j(Y,1)$  is 
a sum of squares in $\R[Y]$ (see \cite[Proposition 1.2.1]{Marshall_book}) 
with the degree of each term bounded by $m$.

If $N+d$ is even, 
we take 
$$\sigma_0 = \sum_{0 \leq j \leq N+d, \ j \hbox{ {\small even}}}b_j(Y, 1)(1-X)^jX^{N+d-j}$$
and
$$\sigma_1 = \sum_{1 \leq j \leq N+d-1, \ j \hbox{ {\small odd}}}b_j(Y, 1)(1-X)^{j-1}X^{N+d-j-1}$$
and the identity $f = \sigma_0 + \sigma_1X(1-X)$ holds. 
In addition, we have
$$
\deg(\sigma_0),  \deg(\sigma_1X(1-X)) \le N + d + m.
$$

If $N+d$ is odd, 
using the identities
$$
X = X^2 + X(1-X)
\qquad \hbox{ and } \qquad
1-X = (1-X)^2 + X(1-X),
$$
we take 
$$
\sigma_0 = \sum_{0 \leq j \leq N+d-1, \ j \hbox{ {\small even}}}b_j(Y, 1)(1-X)^jX^{N+d-j+1} + 
\sum_{1 \leq j \leq N+d, \ j \hbox{ {\small odd}}}b_j(Y, 1)(1-X)^{j+1}X^{N+d-j}
$$
and
$$
\sigma_1 = \sum_{0 \leq j \leq N+d-1, \ j \hbox{ {\small even}}}b_j(Y, 1)(1-X)^jX^{N+d-j-1} + 
\sum_{1 \leq j \leq N+d, \ j \hbox{ {\small odd}}}b_j(Y, 1)(1-X)^{j-1}X^{N+d-j}
$$
and the identity $f = \sigma_0 + \sigma_1X(1-X)$ holds. 
In addition, we have
$$
\deg(\sigma_0),  \deg(\sigma_1X(1-X)) \le N + d + m + 1.
$$
\end{proof}

In Section \ref{subsect:positiv} and Section \ref{subsect:zeros}, under certain 
hypothesis, 
we prove the 
existence and find an upper bound for $N \in \N_0$ satisfying the hypothesis 
of Proposition \ref{prop:metodo_gral}. 
Then, to obtain the representation (\ref{repr_f})
we proceed as follows. 
If it possible to compute the upper bound, 
we compute the expansion of the polynomial $(W + X)^NF$
and then we compute the representation of each $b_j(Y, 1)$ as a sum of 
squares in $\R[Y]$ (see \cite{MagSafSch}).
If it is not possible to compute the upper bound, we 
pick a value of $N$ and 
we proceed by increasing $N$ one by one, we check symbolically 
at each step if it is the case that
$b_j(Y, 1)$ is non-negative on $\R$ for every $0 \le j \le N+d$ 
(see \cite[Chapter 4]{BPR} and \cite{PerRoy}), 
and once this condition is satisfied  
we compute the representation of each $b_j(Y, 1)$ as a sum of 
squares in $\R[Y]$.

For a homogeneous polynomial
$$
g = \sum_{0 \le j \le d} c_jW^jX^{d-j}\in \R[W, X]
$$
we note, as in \cite{Polya_bound}, 
$$
\| g \| = \max \left\{ \frac{|c_{j}|}{\binom{d}{j} }
\ | \  0 \le j \le d \right\}.
$$
One of the main tools we use is the effective version of P\'olya's Theorem from 
\cite{Polya_bound}. In the case of a
homogeneous polynomial $g \in \R[W, X]$ which is positive on $\Delta_1$, 
this theorem states that after 
multiplying for a suitable power of $W+X$, every coefficient is positive. 
Since we will need an explicit positive lower bound for these coefficients, 
we present in Lemma \ref{lem:polya_local}
a slight adaptation of \cite[Theorem 1]{Polya_bound}.  
We omit its proof since it can 
be developed exactly as the proof 
of \cite[Theorem 1]{Polya_bound} with only a minor modification at the final step.

\begin{lemma}\label{lem:polya_local}
Let $g \in \R[W,X]$ homogeneous of degree $d$ with $g>0$ on $\Delta_1$ and
let $\lambda=\min_{\Delta_1} g >0$. For $0 \leq \epsilon<1$, if
$$
N+d \geq \frac{(d-1)d\|g\|}{2(1-\epsilon) \lambda},
$$
for $0 \leq j \leq N+d$ the coefficient of $W^jX^{N+d-j}$ in $(W+X)^Ng$ is greater than or equal to 
$\frac{N!(N+d)^d}{j!(N+d-j)!} \epsilon \lambda$.
\end{lemma}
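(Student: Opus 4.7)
The plan is to track the proof of Theorem 1 in \cite{Polya_bound} verbatim up to its final estimate, and then insert the parameter $\epsilon$ to extract a quantitative lower bound rather than mere positivity. The conclusion of the cited theorem is exactly the case $\epsilon=0$ (or rather, the limit thereof, giving strict positivity), so the only work is to re-examine where the Pólya-type error estimate is compared against $\lambda$.

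First, I would expand $(W+X)^N g$ and write the coefficient $a_j$ of $W^j X^{N+d-j}$ as the binomial convolution of the coefficients of $g$, then re-express it, as in \cite{Polya_bound}, by factoring out the normalization
$$\frac{N!(N+d)^d}{j!(N+d-j)!}$$
and comparing the remaining weighted sum with the evaluation of $g$ at the rational point
$$\left(\frac{j}{N+d},\,\frac{N+d-j}{N+d}\right)\in\Delta_1.$$
This is the Bernstein-type interpretation at the heart of the effective Pólya theorem: the difference between $\binom{N}{j-k}$ and the corresponding monomial evaluation contributes an error $R_j$ which, by the uniform bound proved in \cite[Section 2]{Polya_bound}, satisfies
$$|R_j|\le\frac{(d-1)d\,\|g\|}{2(N+d)}.$$
Meanwhile, $g$ evaluated at the point above is at least $\lambda$ by hypothesis.

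The single modification is at the final step. In \cite{Polya_bound}, one asks $|R_j|\le\lambda$ to conclude that the bracketed factor is non-negative; here I instead ask $|R_j|\le(1-\epsilon)\lambda$, which rearranges precisely to the hypothesis
$$N+d\ge\frac{(d-1)d\,\|g\|}{2(1-\epsilon)\lambda}.$$
Under this inequality, the bracketed factor is at least $\lambda-(1-\epsilon)\lambda=\epsilon\lambda$, and multiplying by the prefactor yields the stated bound on the coefficient of $W^jX^{N+d-j}$.

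There is no genuine obstacle: all the combinatorial and analytic estimates needed are already established in \cite{Polya_bound}, and the only new ingredient is keeping $\epsilon$ as a free parameter rather than collapsing it to produce bare positivity. The reason the authors chose to omit the proof is precisely that reproducing the argument verbatim with $(1-\epsilon)$ in place of $1$ at one inequality would add nothing conceptually new to the exposition.
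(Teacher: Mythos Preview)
Your proposal is correct and matches the paper's own approach exactly: the authors state that the proof ``can be developed exactly as the proof of \cite[Theorem 1]{Polya_bound} with only a minor modification at the final step,'' and the modification you describe---replacing the requirement $|R_j|\le\lambda$ by $|R_j|\le(1-\epsilon)\lambda$ to retain the quantitative lower bound $\epsilon\lambda$---is precisely that step.
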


\subsection{The case of $f$ positive on the strip}\label{subsect:positiv}

In this section, we study the case of $f$ positive on $[0, 1] \times \R$
and fully $m$-ic on $[0, 1]$ and we prove Theorem \ref{thm:f_positivo}.

\begin{proposition}\label{prop:cota_para_N}
Let $f \in \R[X,Y]$ 
with $f > 0$ on $[0,1]\times \R$, $f$ fully $m$-ic on $[0, 1]$ and 
$$
f^{\bullet} =  \min\{\bar f(x, y, z) \ | \ x \in [0, 1], \, y^2 + z^2 = 1 \} > 0.
$$
Then, if
$$
N+d > \frac{(d-1)d(d+1)(m+1)\|f\|_{\infty}}{2 f^{\bullet}}, 
$$
for every $0 \le j \le N+d$, $b_j \ge 0$ on $C$. 
\end{proposition}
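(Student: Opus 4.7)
The plan is to fix an arbitrary $(y_0, z_0) \in C$ and apply Lemma \ref{lem:polya_local} to the specialization $g(W, X) := F(W, X, y_0, z_0) \in \R[W, X]$, which is homogeneous of degree $d$. By the definition of the $b_j$'s in (\ref{eq:reemplazo_fund}), the coefficient of $W^j X^{N+d-j}$ in $(W+X)^N g$ equals $b_j(y_0, z_0)$; therefore proving that all these coefficients are non-negative for every $(y_0, z_0) \in C$ yields the desired conclusion.

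First I would verify the positivity of $g$ on $\Delta_1$. For $(w, x) \in \Delta_1$ one has $w + x = 1$, so every factor $(W+X)^{d-j}$ specializes to $1$ and
\[
g(w, x) = \sum_{0 \le i \le m}\sum_{0 \le j \le d} a_{ji}\, x^j y_0^i z_0^{m-i} = \bar f(x, y_0, z_0).
\]
Since $(y_0, z_0) \in C$ and $x \in [0, 1]$, this is bounded below by $f^{\bullet}$; hence $g > 0$ on $\Delta_1$ and $\lambda := \min_{\Delta_1} g \ge f^{\bullet}$.

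Next I would estimate $\|g\|$. Expanding $X^j(W+X)^{d-j}$ via the binomial theorem and collecting like powers, the coefficient of $W^k X^{d-k}$ in $g$ is
\[
c_k \;=\; \sum_{0 \le i \le m}\sum_{0 \le j \le d-k} a_{ji}\, y_0^i z_0^{m-i} \binom{d-j}{k}.
\]
Since $|a_{ji}| \le \|f\|_\infty$, and $y_0^2 + z_0^2 = 1$ forces $|y_0^i z_0^{m-i}| \le 1$, and by the hockey-stick identity $\sum_{l=k}^{d} \binom{l}{k} = \binom{d+1}{k+1}$, one obtains $|c_k| \le (m+1)\|f\|_\infty \binom{d+1}{k+1}$. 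Using $\binom{d+1}{k+1}/\binom{d}{k} = (d+1)/(k+1) \le d+1$ yields $|c_k|/\binom{d}{k} \le (d+1)(m+1)\|f\|_\infty$, so $\|g\| \le (d+1)(m+1)\|f\|_\infty$.

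Finally, I would apply Lemma \ref{lem:polya_local} with $\epsilon = 0$: the hypothesis $N + d \ge (d-1)d\|g\|/(2\lambda)$ is implied by the stated bound $N + d > (d-1)d(d+1)(m+1)\|f\|_\infty / (2 f^{\bullet})$ together with $\|g\|/\lambda \le (d+1)(m+1)\|f\|_\infty/f^{\bullet}$. The conclusion of the lemma with $\epsilon = 0$ is exactly that each coefficient of $(W+X)^N g$ is non-negative, i.e., $b_j(y_0, z_0) \ge 0$ for every $j$. Since $(y_0, z_0) \in C$ was arbitrary, $b_j \ge 0$ on $C$. The only mildly delicate step is the coefficient estimate for $\|g\|$, which reduces to the hockey-stick identity and the normalization $|y_0|, |z_0| \le 1$ provided by $(y_0, z_0) \in C$; all other ingredients are a direct invocation of Lemma \ref{lem:polya_local}.
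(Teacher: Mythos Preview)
Your proof is correct and follows essentially the same approach as the paper: fix $(y_0,z_0)\in C$, observe that $F(w,x,y_0,z_0)=\bar f(x,y_0,z_0)\ge f^{\bullet}$ on $\Delta_1$, bound $\|F(W,X,y_0,z_0)\|\le (d+1)(m+1)\|f\|_\infty$, and apply the effective P\'olya bound. The only cosmetic difference is in the norm estimate: the paper uses the triangle inequality over the $(d+1)(m+1)$ monomials $a_{ji}X^j(W+X)^{d-j}y^iz^{m-i}$ together with $\|X^j(W+X)^{d-j}\|\le 1$, whereas you compute the coefficients $c_k$ explicitly and invoke the hockey-stick identity---which is in fact the very computation the paper carries out later in (\ref{eq:cota_c_h}) and (\ref{eq:cota_norma_F}).
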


\begin{proof}{Proof:}
Since for every $(w,x,y,z) \in \Delta_1 \times C$, 
$F(w,x,y,z)=\bar f(x,y,z)$ we have $F \geq f^{\bullet} $ on $\Delta_1 \times C$.
   
On the other hand, it is easy to see that for $(y,z) \in C$, 
  $$
\|F(W,X,y,z)\|  \leq (d+1)(m+1) \max_{\substack{0 \leq i \leq m \\ 0 \leq j \leq d}} \left\{ \|a_{ji}X^j (W+X)^{d-j} y^iz^{m-i}  \| \right\}  
\le 
(d+1)(m+1) \|f\|_{\infty}.
$$
Using the bound for Polya's Theorem from \cite[Theorem 1]{Polya_bound},
if $N \in \N$ verifies
$$
N+d> \frac{(d-1)d(d+1)(m+1)\|f\|_{\infty}}{2 f^{\bullet}},
$$
all the coefficients of the polynomial
$$
(W+X)^N F(W,X,y,z) = \sum_{0 \le j \le N+d}b_j(y, z)W^jX^{N+d-j}  \in \R[W, X]
$$
are positive. In other words, for $0 \le j \le N +d$, $b_j \ge 0$ on $C$ as we wanted to prove. 
\end{proof}

We deduce easily Theorem \ref{thm:f_positivo}.
 
\begin{proof}{Proof of Theorem \ref{thm:f_positivo}:} 
By Proposition \ref{prop:cota_para_N} if $N \in \N$ is the smallest integer number such that 
$$
N+d > \frac{(d-1)d(d+1)(m+1)\|f\|_{\infty}}{2 f^{\bullet}},
$$
then  for every $0 \le j \le N+d$, $b_j \ge 0$ on $C$.
By Proposition \ref{prop:metodo_gral}, we have that $f$ can be written as in (\ref{repr_f}) with 
$$
\deg(\sigma_0), \deg(\sigma_1 X(1-X)) \leq N+d+m+1.
$$

Since 
$$
  \|f\|_{\infty}\geq |a_{00}|=|f (0,0)| = f(0, 0) =\bar f (0,0,1) \geq f^{\bullet},
  $$
we have
$$
    \deg(\sigma_0), \deg(\sigma_1 X(1-X)) \leq N+d+m+1 
     \leq \frac{(d-1)d(d+1)(m+1)\|f\|_{\infty}}{2 f^{\bullet}} +m+2 \le 
    \frac{d^3(m+1)\|f\|_{\infty}}{ f^{\bullet}}.
$$
  \end{proof}

\subsection{The case of $f$ with a finite number of zeros on the boundary of the strip}\label{subsect:zeros}

Next, we want to relax the hypothesis
$f > 0$ on $[0,1] \times \R$ to
$f \ge 0$ on $[0,1] \times \R$ and with a finite numbers of zeros on the boundary of the strip. 
Consider 
$$
 C_+ = \{(y, z) \in \R^2 \ | \ y^2 + z^2 = 1, \ z \geq 0 \}.
$$
For $f$ non-negative in $[0, 1] \times \R$ and fully $m$-ic on $[0, 1]$, it is clear that $m$ is even. Then, since
each $b_j(Y, Z) \in \R[Y, Z]$ is homogeneous of degree $m$, to prove that   
$b_j \ge 0$ on $C$
it is enough to prove that 
$b_j \ge 0$ on $C_+$. The advantage of considering $C_+$ instead of $C$ is simply that 
under the present hypothesis
there is a bijection between
the zeros of $f$ in $[0, 1] \times \R$ and the zeros of $F$ in $\Delta_1 \times C_+$ given by
$$
(x, \alpha)  \mapsto  (1-x,x, y_{\alpha}, z_{\alpha}) \qquad \hbox { with } \qquad 
(y_{\alpha}, z_{\alpha}) = \left(\frac{\alpha}{\sqrt{\alpha^2+1}}, \frac{1}{\sqrt{\alpha^2+1}} \right). 
$$

The idea is to consider separately, 
for each  zero $(x, \alpha)$ of $f$,  
the polynomial $F(W,X,y_{\alpha},z_{\alpha}) \in \R[W,X]$ and
to find $N_{\alpha} \in \N_0$
such that $(W+X)^{N_{\alpha}}F(W,X,y_{\alpha},z_{\alpha})$ has non-negative coefficients
$b_j(y_\alpha, z_\alpha)$.
Then, we show that the same $N_\alpha$
works for $(y,z) \in C_+$ close to $(y_{\alpha},z_{\alpha})$. Finally, 
in the rest of $C_+$ we  use compactness arguments.

\begin{proposition}\label{prop:cota_para_N_con_ceros_bordes}
  Let $f \in \R[X,Y]$ with
  $f \geq 0$ on $[0,1]\times \R$, $f$ fully $m$-ic on $[0, 1]$ and
  suppose that $f$ has a finite number of zeros in $[0,1]\times\R$, all of them lying on $\{0,1\}\times\R$, 
  and   $\frac{\partial f}{\partial X}$ 
  does not vanish at any of them. 
  Then, there is  $N \in \N_0$ such that
  for every $0 \le j \le N+d$, $b_j \ge 0$ on $C$. 
\end{proposition}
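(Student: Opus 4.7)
The strategy is a local-to-global argument: handle the behavior near each zero of $f$ separately by a careful factorization plus P\'olya, then glue with a uniform P\'olya estimate on the compact complement.

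First, a series of reductions. Since $f\ge 0$ on $[0,1]\times \R$ is fully $m$-ic, $m$ must be even, so each $b_j\in \R[Y,Z]$ (homogeneous of degree $m$) satisfies $b_j(-y,-z)=b_j(y,z)$, and hence it suffices to show $b_j\ge 0$ on $C_+$. Moreover, evaluating the identity (\ref{eq:reemplazo_fund}) at $X=0$ and at $W=0$ yields, for every $N$, $b_{N+d}(y,z)=\bar f(0,y,z)$ and $b_0(y,z)=\bar f(1,y,z)$; both are non-negative on $C$ as homogenizations of non-negative polynomials in $\R[Y]$ of even degree $m$. So the ``boundary'' coefficients are automatic, regardless of $N$.

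Next, the local analysis near each zero. For each zero $(x,\alpha)$ of $f$ (necessarily with $x\in\{0,1\}$), set $F_\alpha(W,X):=F(W,X,y_\alpha,z_\alpha)$. On $\Delta_1$, $F_\alpha$ is a positive scalar multiple of $f(X,\alpha)$, which by hypothesis has only simple roots on $[0,1]$, located at $x=0$ and/or $x=1$. Hence $F_\alpha$ factors as $F_\alpha(W,X)=W^{b_\alpha}X^{c_\alpha}h_\alpha(W,X)$, with $b_\alpha,c_\alpha\in\{0,1\}$ recording which boundary points are zeros of $f(X,\alpha)$, and $h_\alpha$ homogeneous and \emph{strictly positive} on $\Delta_1$. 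Applying Lemma \ref{lem:polya_local} to $h_\alpha$ produces $N_\alpha\in\N_0$ such that $(W+X)^{N_\alpha}h_\alpha$ has all coefficients strictly positive; multiplying by $W^{b_\alpha}X^{c_\alpha}$ then shows that the coefficients $b_j(y_\alpha,z_\alpha)$ of $(W+X)^{N_\alpha}F_\alpha$ are non-negative, and strictly positive for all interior indices $1\le j\le N_\alpha+d-1$. Since each $b_j(y,z)$ is a polynomial in $(y,z)$, continuity provides an open neighborhood $U_\alpha\subset C_+$ of $(y_\alpha,z_\alpha)$ on which all the interior coefficients remain strictly positive; combined with the permanent non-negativity of $b_0$ and $b_{N_\alpha+d}$, every coefficient of $(W+X)^{N_\alpha}F(W,X,y,z)$ is non-negative on $U_\alpha$.

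Finally, the compact gluing. The complement $K:=C_+\setminus \bigcup_\alpha U_\alpha$ is compact and, by the bijection between zeros of $f$ in $[0,1]\times\R$ and zeros of $F$ in $\Delta_1\times C_+$ recalled before the statement, $F(W,X,y,z)>0$ on $\Delta_1$ for every $(y,z)\in K$. Hence $\min_{\Delta_1\times K}F>0$ and $\sup_{(y,z)\in K}\|F(W,X,y,z)\|<\infty$, so a uniform application of Lemma \ref{lem:polya_local} (exactly as in the proof of Proposition \ref{prop:cota_para_N}) yields $N_K$ such that $(W+X)^{N_K}F(W,X,y,z)$ has non-negative coefficients for every $(y,z)\in K$. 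Setting $N:=\max\{N_K,\max_\alpha N_\alpha\}$ and using that multiplying a polynomial with non-negative coefficients by $(W+X)$ preserves this property, we conclude that $(W+X)^{N}F(W,X,y,z)$ has non-negative coefficients for every $(y,z)\in C_+$, which is the claim. The main obstacle is producing the factorization $F_\alpha=W^{b_\alpha}X^{c_\alpha}h_\alpha$ with $h_\alpha>0$ on $\Delta_1$: this is precisely where the hypothesis that $\frac{\partial f}{\partial X}$ does not vanish at the zeros is indispensable, as it ensures the roots are simple and can be factored out cleanly while leaving a strictly positive quotient to which the effective P\'olya theorem applies.
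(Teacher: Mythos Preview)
Your argument is correct and follows the same strategy as the paper: factor $W$ and/or $X$ out of $F(W,X,y_\alpha,z_\alpha)$ at each $\alpha\in\Pi_f$, apply the effective P\'olya theorem to the strictly positive quotient to get interior coefficients strictly positive, extend by continuity to a neighborhood of $(y_\alpha,z_\alpha)$ in $C_+$, and handle the compact complement by a uniform P\'olya bound. The only difference is cosmetic: the paper carries explicit quantitative estimates throughout (bounding $\|\widetilde F_\alpha\|$, using Lemma~\ref{lem:polya_local} with $\epsilon=1/2$ to get explicit lower bounds on the interior $b_{j,N_\alpha}(y_\alpha,z_\alpha)$, and then computing explicit radii $r_\alpha$ via gradient bounds on the $c_h$ and the falling-factorial formula for $b_{j,N}$ from \cite{Polya_bound}), whereas you invoke continuity and compactness directly. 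Since the proposition only asserts existence of $N$, your softer version is entirely sufficient; the paper's extra precision does not feed into a sharper statement here.
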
 

\begin{proof}{Proof:}
For $0 \le h \le d$, we define the polynomials $c_h \in \R[Y, Z]$ as follows: 
$$
F = \sum_{0 \le h \le d} c_h(Y, Z)W^hX^{d-h}.
$$
Then, for $0 \le h \le d$, 
$$
c_h(Y,Z)= \sum_{0 \leq i \leq m} \sum_{0 \leq j \leq d-h}   
a_{ji}\binom{d-j}{h}  Y^iZ^{m-i}
$$
is a homogeneous polynomial in $\R[Y, Z]$ of 
degree $m$, and for $(y, z) \in C_+$ we have 
\begin{equation}\label{eq:cota_c_h}
|c_h(y,z)|\le (m+1)\|f \|_{\infty} \sum_{0 \leq j \leq d-h}\binom{d-j}{h}
=  
(m+1)\binom{d+1}{h+1}\|f \|_{\infty} 
\end{equation}
and
\begin{equation}\label{eq:cota_norma_F}
\|F(W, X, y, z)\| \le 
\max \left \{ (m+1) \frac{\binom{d+1}{h+1}}{\binom{d}{h}}\|f \|_{\infty} \ | \ 0 \le h \le d   \right\} \le 
(m+1) (d+1)\|f \|_{\infty}.
\end{equation}

Now, since along the proof we will consider several values of $N$, 
we add the index $N$ to the notation of polynomials $b_j$ in the following way: 
$$
(W+X)^N F= \sum_{0 \leq j \leq N+d}b_{j,N} (Y,Z) W^j X^{N+d-j}.
$$
So we need to prove that there is $N \in \N_0$ such that 
for every $0 \le j \le N+d$, $b_{j,N} \ge 0$ on $C_+$. 
It is clear that, for a fixed $(y, z) \in C_+$, if $N \in \N_0$ satisfies that  
for every $0 \le j \le N+d$, $b_{j,N}(y, z) \ge 0$, then  
any $N' \in \N_0$ with $N' \ge N$ also satisfies 
that for every $0 \le j \le N'+d$, $b_{j,N'}(y, z) \ge 0$.

For $N \in \N_0$ and $\alpha \in \R$, we have the identities
\begin{equation} \label{eq:f_en_1}
b_{0,N}(y_\alpha, z_\alpha) = 
c_0(y_\alpha, z_\alpha) = 
F(0, 1, y_\alpha, z_\alpha) = 
\bar f(1, y_\alpha, z_\alpha) = 
\frac{1}{\sqrt{\alpha^2 + 1}^m}f(1, \alpha) 
\end{equation}
and 
\begin{equation} \label{eq:f_en_0}
b_{N+d,N}(y_\alpha, z_\alpha) =   
c_d(y_\alpha, z_\alpha) =
F(1, 0, y_\alpha, z_\alpha) = 
\bar f(0, y_\alpha, z_\alpha) = 
\frac{1}{\sqrt{\alpha^2 + 1}^m}f(0, \alpha).  
\end{equation}
From (\ref{eq:f_en_1}) and (\ref{eq:f_en_0}) we deduce that 
for every $N \in \N_0$, $b_{0, N}
\ge 0$ on $C_+$ and $b_{N+d, N} \ge 0$ on $C_+$. 
So we need to prove that there is $N \in \N_0$ such that 
for every $1 \le j \le N+d-1$, $b_{j,N} \ge 0$ on $C_+$. 

We note
$$
\Pi_f= \{\alpha \in \R \ | \  f(x,\alpha)=0 \ \mbox{for some} \ x \in \{0,1\} \} \subseteq \R.
$$
We will show first that for each $\alpha \in \Pi_f$ there is  
$N_{\alpha}\in \N_0$ such that for $1 \le j \le N_\alpha+d-1$, 
$b_{j, N_\alpha}(y_{\alpha},z_{\alpha})$ is positive on $C_+$.
We consider three cases:

\begin{itemize}
\item  $f(0,{\alpha})=0$ and $f(1, \alpha) \ne 0$: 

From (\ref{eq:f_en_0}) we have 
$b_{N+d, N}(y_\alpha, z_\alpha) = 0$ for every $N \in \N_0$ and 
also $c_d(y_\alpha, z_\alpha) = 0$. 
We consider the homogeneous polynomial of degree $d-1$
$$
\widetilde F_{{\alpha}}(W,X)= \frac{F(W,X,y_{{\alpha}},z_{{\alpha}})}
{X}
= \sum_{0 \le h \le d-1} c_h(y_{\alpha}, z_{\alpha})W^hX^{d-h-1}
\in \R[W,X].
$$
From (\ref{eq:cota_c_h}) we deduce 
that for $0 \le h \le d-1$,
$$
\frac{|c_h(y_\alpha, z_\alpha)|}{\binom{d-1}{h}} \le 
(m+1)\frac{\binom{d+1}{h+1}}{\binom{d-1}{h}}\|f \|_{\infty} =
(m+1)\frac{(d+1)d}{(h+1)(d-h)}\|f \|_{\infty}  \le 
(m+1)(d+1)\|f \|_{\infty}
$$
and we have
$\|\tilde F_{\alpha}\| \le (m+1)(d+1)\|f \|_{\infty}$.

On the other hand, it is clear that 
$\widetilde F_{{\alpha}}>0$ on $\Delta_1-\{(1,0)\}$ and, in addition, 
$$
\widetilde F_{{\alpha}}(1,0) = \frac{\partial F(1, 0, y_{\alpha}, z_{\alpha})}{\partial X}
= 
\frac{1}{\sqrt{{\alpha}^2+1}^m} \frac{\partial  f}{\partial X}(0,{\alpha})  
> 0
 $$
 therefore $\widetilde F_{{\alpha}}(1,0)>0$.  We note
 $$
 \lambda_{{\alpha}}= \min_{\Delta_1}\widetilde F_{{\alpha}}>0.
 $$

 By Lemma \ref{lem:polya_local} with $\epsilon = 1/2$, if
 $N_{\alpha} \in \N_0$ satisfies
 $$
 N_{\alpha}+d -1 \geq \frac{(d-2)(d-1)(d+1)(m+1)\|f\|_{\infty}}{\lambda_{\alpha}}
 $$
and
$$
(W+X)^{N_{\alpha}}\widetilde F_{{\alpha}} = \sum_{0 \le j \le N_{\alpha} + d-1}
c_{j}W^{j}X^{N_{\alpha}+d-1-j},
$$
for $0 \leq j \leq N_{\alpha}+d-1$ we have 
$$
c_{j} \ge \frac{N_{\alpha}!(N_{\alpha}+d-1)^{d-1}}{j!(N_{\alpha}+d-1-j)!} 
 \frac{\lambda_{\alpha}}{2}.$$
But since
$$
\sum_{0 \le j \le N_{\alpha} + d-1}
c_{j}W^{j}X^{N_{\alpha}+d-j} = 
(W+X)^{N_{\alpha}}X\widetilde F_{{\alpha}} =
$$
$$
=
(W+X)^{N_{\alpha}}F(W, X, y_\alpha, z_\alpha) 
  = \sum_{0 \le j \le N_\alpha+d-1}b_{j, N_\alpha}(y_\alpha, z_\alpha)W^jX^{N_\alpha+d-j}
$$
we conclude that for $0 \le j \le N_\alpha + d-1$, 
$$
b_{j, N_\alpha}(y_\alpha, z_\alpha)  = c_{j}\ge 
\frac{N_{\alpha}!(N_{\alpha}+d-1)^{d-1}}{j!(N_{\alpha}+d-1-j)!} 
 \frac{\lambda_{\alpha}}{2}.
$$

\item  $f(0,{\alpha})\ne0$ and $f(1, \alpha) = 0$:

From (\ref{eq:f_en_1}) we have 
$b_{0, N}(y_\alpha, z_\alpha) = 0$ for every $N \in \N_0$ and 
also $c_0(y_\alpha, z_\alpha) = 0$. 
We consider the homogeneous polynomial of degree $d-1$
$$
\widetilde F_{{\alpha}}(W,X)= \frac{F(W,X,y_{{\alpha}},z_{{\alpha}})}
{W}
= \sum_{1 \le h \le d} c_h(y_{\alpha}, z_{\alpha})W^{h-1}X^{d-h}
\in \R[W,X].
$$
Then, proceeding similarly to the previous case we prove
$\|\tilde F_{\alpha}\| \le \frac12(m+1)d(d+1)\|f \|_{\infty}$.
Moreover, since 
$\widetilde F_{{\alpha}}>0$ on $\Delta_1-\{(0,1)\}$ and  
$$
\widetilde F_{{\alpha}}(0,1) = \frac{\partial F(0, 1, y_{\alpha}, z_{\alpha})}{\partial W}
= 
- \frac{1}{\sqrt{{\alpha}^2+1}^m} \frac{\partial  f}{\partial X}(1,{\alpha})  
> 0
 $$
we have that  $\widetilde F_{{\alpha}}(1,0)>0$ and  we note
 $$
 \lambda_{{\alpha}}= \min_{\Delta_1}\widetilde F_{{\alpha}}>0.
 $$
Finally, using Lemma \ref{lem:polya_local} with $\epsilon = 1/2$,
we conclude that 
if
 $N_{\alpha} \in \N_0$ satisfies
 $$
 N_{\alpha}+d -1 \geq \frac{(d-2)(d-1)d(d+1)(m+1)\|f\|_{\infty}}{2\lambda_{\alpha}},
 $$
for $1 \le j \le N_\alpha + d$, 
$$
b_{j, N_\alpha}(y_\alpha, z_\alpha) \ge 
\frac{N_{\alpha}!(N_{\alpha}+d-1)^{d-1}}{(j-1)!(N_{\alpha}+d-j)!} 
 \frac{\lambda_{\alpha}}{2}.
$$

\item $f(0,{\alpha})=0$ and $f(1, \alpha) = 0$:

From (\ref{eq:f_en_1}) and (\ref{eq:f_en_0}) we have 
$b_{0, N}(y_\alpha, z_\alpha) = b_{N+d, N}(y_\alpha, z_\alpha) = 0$ for every $N \in \N_0$ and 
also $c_0(y_\alpha, z_\alpha) = c_d(y_\alpha, z_\alpha) = 0$. 
We consider the homogeneous polynomial of degree $d-2$
$$
\widetilde F_{{\alpha}}(W,X)= \frac{F(W,X,y_{{\alpha}},z_{{\alpha}})}
{WX}
= \sum_{1 \le h \le d-1} c_h(y_{\alpha}, z_{\alpha})W^{h-1}X^{d-h-1}
\in \R[W,X].
$$
Then, proceeding similarly to the previous cases we prove again
$\|\tilde F_{\alpha}\| \le \frac12(m+1)d(d+1)\|f \|_{\infty}$.
We note
 $$
 \lambda_{{\alpha}}= \min_{\Delta_1}\widetilde F_{{\alpha}}>0.
 $$
Finally, using Lemma \ref{lem:polya_local} with $\epsilon = 1/2$,
we conclude that 
if
 $N_{\alpha} \in \N_0$ satisfies
 $$
 N_{\alpha}+d -2 \geq \frac{(d-3)(d-2)d(d+1)(m+1)\|f\|_{\infty}}{2\lambda_{\alpha}},
 $$
for $1 \le j \le N_\alpha + d-1$, 
$$
b_{j, N_\alpha}(y_\alpha, z_\alpha) \ge 
\frac{N_{\alpha}!(N_{\alpha}+d-2)^{d-2}}{(j-1)!(N_{\alpha}+d-1-j)!} 
 \frac{\lambda_{\alpha}}{2}.
$$
\end{itemize}

Now, our next goal is to compute a radios $r_\alpha > 0$ around each $(y_\alpha, z_\alpha)$ so that
for $(y, z) \in C_+$ with $\| (y, z) - (y_\alpha, z_\alpha) \| \le r_\alpha$, 
for $1 \le j \le N_\alpha + d - 1$, we have 
$b_{j, N_\alpha}(y, z) \ge 0$.
First, we do some auxiliary computations.

For $0 \le h \le d$ and $(y, z) \in \R^2$ with $y^2 + z^2 \le 1$ we have  
\begin{align*}
 \|\nabla c_h(y, z) \| 
 & \leq \left| \frac{\partial c_h}{\partial Y}(y, z) \right| + \left| \frac{\partial c_h}{\partial Z}(y, z) \right|\\
 & \leq  \sum_{1 \leq i \leq m} \sum_{0 \leq j \leq d-h} |a_{ji}| \binom{d-j}{h}i +  \sum_{0 \leq i \leq m-1} \sum_{0 \leq j \leq d-h} |a_{ji}| \binom{d-j}{h}(m-i)\\
 & \le  m(m+1)   \binom{d+1}{h+1}\|f\|_{\infty}  \\
 & \leq m(m+1) (d+1) \binom{d}{h}  \|f\|_{\infty}.
\end{align*}
Then, for $(y, z) \in C_+$, 
$$
|c_h(y,z)-c_h(y_{\alpha},z_{\alpha})| \leq m(m+1) (d+1) \binom{d}{h}\|f\|_{\infty} \|(y,z)-(y_{\alpha},z_{\alpha})\|.
$$

We introduce now some notation following \cite{Polya_bound}.
For $t \in \R$, $m\in \N_0$ and a variable $U$, 
$$
(U)_t^m:=U(U-t)(U-2t)\cdots(U-(m-1)t)= \prod_{0\leq i \leq m-1}(U-it) \in \R[U].
$$
Also, for $t \in \R$ 
$$
F_{t}(W, X, Y, Z) =  \sum_{0 \le h \le d}c_h(Y, Z)(W)_t^h(X)_t^{d-h}.
$$

By \cite[(4)]{Polya_bound}, for $N \in \N_0$ and  $0 \le j \le N+d$ we have
$$
b_{j, N}(y,z)= \frac{N!(N+d)^d}{j!(N+d-j)!} F_{\frac{1}{N+d}}\left(\frac{j}{N+d}, \frac{N+d-j}{N+d},y,z \right).
$$
Then, using the Vandermonde-Chu identity (see \cite[(6)]{Polya_bound}), for $(y, z) \in C_+$ we have 
\begin{eqnarray*}
& & \left |  F_{\frac{1}{N+d}}\left(\frac{j}{N+d},\frac{N+d-j}{N+d},y,z \right) - 
F_{\frac{1}{N+d}}\left(\frac{j}{N+d},\frac{N+d-j}{N+d},y_{\alpha},z_{\alpha} \right) \right |  
\\[3mm]
& & \le 
 \sum_{0 \leq h \leq d} | c_h(y,z)-c_h(y_{\alpha},z_{\alpha}) | 
 \left(\frac{j}{N+d} \right)^h_{\frac{1}{N+d}} 
 \left(\frac{N+d-j}{N+d} \right)^{d-h}_{\frac{1}{N+d}}
\\[3mm]
& & \le 
 m(m+1)(d+1)\|f\|_{\infty}
 \|(y,z)-(y_{\alpha},z_{\alpha})\|
 \left(
 \sum_{0 \leq h \leq d}  
 \binom{d}{h}
 \left(\frac{j}{N+d} \right)^h_{\frac{1}{N+d}} 
 \left(\frac{N+d-j}{N+d} \right)^{d-h}_{\frac{1}{N+d}}
 \right)
\\[3mm]
& & =
 m(m+1)(d+1)\|f\|_{\infty}
 \|(y,z)-(y_{\alpha},z_{\alpha})\|
 (1)^d_{\frac{1}{N+d}}
\\[3mm]
& & \le 
m(m+1)(d+1)\|f\|_{\infty}
 \|(y,z)-(y_{\alpha},z_{\alpha})\|.
\end{eqnarray*}

Consider $\alpha \in \Pi_f$.  If
$f(0, \alpha) = 0$ and $f(1, \alpha) \ne 0$
we take 
$$
r_{\alpha}=\frac{\lambda_{\alpha}(N_{\alpha}+d-1)^{d-1}}{2(N_{\alpha}+d)^dm(m+1)(d+1)\|f\|_{\infty}}.
$$
Then, for $(y, z) \in C_+$ with $\|(y, z) - (y_\alpha, z_\alpha)\| \le r_\alpha$  and $1 \le j \le N_\alpha + d - 1$ we have 
\begin{eqnarray*}
b_{j, N}(y,z) &= & b_{j, N}(y_{\alpha},z_{\alpha})+b_{j, N}(y,z)-b_{j, N}(y_{\alpha},z_{\alpha}) \\[3mm]
& \geq &
\frac{N_{\alpha}!(N_{\alpha}+d-1)^{d-1}}{j!(N_{\alpha}+d-1-j)!} 
 \frac{\lambda_{\alpha}}{2}
 -  \frac{N_\alpha!(N_\alpha+d)^d}{j!(N_\alpha+d-j)!} m(m+1)(d+1)\|f\|_{\infty} r_\alpha \\[3mm]
& \geq & 0.
\end{eqnarray*}
If 
$f(0, \alpha) \ne 0$ and $f(1, \alpha) = 0$ 
we take again
$$
r_{\alpha}=\frac{\lambda_{\alpha}(N_{\alpha}+d-1)^{d-1}}{2(N_{\alpha}+d)^dm(m+1)(d+1)\|f\|_{\infty}}
$$
and if 
$f(0, \alpha) \ne 0$ and $f(1, \alpha) = 0$ 
we take 
$$
r_{\alpha}=\frac{\lambda_{\alpha}(N_{\alpha}+d-2)^{d-2}}{2(N_{\alpha}+d)^dm(m+1)(d+1)\|f\|_{\infty}}
$$
and in both cases we proceed in a similar way.

Now, consider $K \subseteq C_+$ defined by
$$
K=\left\{(y,z) \in C_+ : \|(y,z)-(y_{\alpha},z_{\alpha})\| \geq r_{\alpha} \ \mbox{for all} \ {\alpha} \in \Pi_f \right\}.
$$
Since $K$ is compact and $\lambda_K= \min_{\Delta_1 \times K} F >0$, by \cite[Theorem 1]{Polya_bound} using (\ref{eq:cota_norma_F}),
if
$$
N+d> \frac{(d-1)d(d+1)(m+1)\|f\|_{\infty}}{2 \lambda_K},
$$
for  $0 \le j \le N+d$,
$b_{j,N}(y,z)\geq 0$ for every $(y,z) \in K$.

Finally, if $N \in \N$,  
$$
N = \max\left\{\Big\lfloor \frac{(d-1)d(d+1)(m+1)\|f\|_{\infty}}{2 \lambda_K} \Big \rfloor -d+1, \, \max \left \{ N_{\alpha} \, | \,  {\alpha} \in \Pi_f \right \} \right\},
$$ 
we conclude that for $0 \le j \le N+d$, $b_{j, N} \geq 0$ on $C_+$.
\end{proof}

From Proposition \ref{prop:metodo_gral} and Proposition \ref{prop:cota_para_N_con_ceros_bordes} we deduce the 
following result.

\begin{theorem}\label{thm:cota_con_ceros_bordes}
Let $f \in \R[X,Y]$ with
$f \geq 0$ on $[0,1]\times \R$, $f$ fully $m$-ic on $[0, 1]$ and
suppose that $f$ has a finite number of zeros in $[0,1]\times\R$, all of them lying on $\{0,1\}\times\R$, 
and   $\frac{\partial f}{\partial X}$
does not vanish at any of them. 
Then, for $N \in \N_0$ as in  Proposition \ref{prop:cota_para_N_con_ceros_bordes},
$f$ can be written as in (\ref{repr_f}) with 
$$
\deg(\sigma_0), \deg(\sigma_1 X(1-X)) \leq N+d+m+1.
$$
\end{theorem}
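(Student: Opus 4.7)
The statement is essentially a direct concatenation of the two previous propositions, so my plan is simply to glue them together, and I do not foresee any serious obstacle at this final step; the real work has already been done in Proposition \ref{prop:cota_para_N_con_ceros_bordes}.

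The plan is as follows. First, I would invoke Proposition \ref{prop:cota_para_N_con_ceros_bordes} with the given $f$: under the hypotheses $f \ge 0$ on $[0,1]\times \R$, $f$ fully $m$-ic on $[0,1]$, only finitely many zeros all on $\{0,1\} \times \R$, and $\tfrac{\partial f}{\partial X}$ nonvanishing at them, it provides an $N \in \N_0$ such that each coefficient polynomial $b_j(Y,Z)$ appearing in the expansion
$$
(W+X)^N F = \sum_{0 \le j \le N+d} b_j(Y,Z)\, W^j X^{N+d-j}
$$
satisfies $b_j \ge 0$ on $C$ for every $0 \le j \le N+d$.

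With this $N$ in hand, I would then apply Proposition \ref{prop:metodo_gral}, whose hypothesis is exactly that each $b_j$ be non-negative on $C$. Its conclusion is precisely that $f$ admits a representation as in (\ref{repr_f}) with
$$
\deg(\sigma_0),\ \deg(\sigma_1 X(1-X)) \le N + d + m + 1,
$$
which is the desired bound.

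The only minor point worth flagging is the passage from $C_+$ to $C$: Proposition \ref{prop:cota_para_N_con_ceros_bordes} in its proof actually verifies $b_j \ge 0$ on $C_+$, and uses that $m$ is even (a consequence of $f$ being fully $m$-ic and non-negative on a full-dimensional set) together with the fact that $b_j$ is homogeneous of degree $m$ in $(Y,Z)$ to extend the inequality to $C$. Since this extension is already built into the statement of Proposition \ref{prop:cota_para_N_con_ceros_bordes}, no additional argument is needed here, and the deduction of Theorem \ref{thm:cota_con_ceros_bordes} reduces to a one-line composition of the two results.
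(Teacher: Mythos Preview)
Your proposal is correct and matches the paper's approach exactly: the paper simply states that Theorem \ref{thm:cota_con_ceros_bordes} is deduced from Proposition \ref{prop:metodo_gral} and Proposition \ref{prop:cota_para_N_con_ceros_bordes}, with no further argument given. Your observation about the $C_+$ versus $C$ passage is accurate and, as you note, already absorbed into the statement of Proposition \ref{prop:cota_para_N_con_ceros_bordes}.
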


We conclude with an example of a polynomial 
$f \in \R[X,Y]$ with
$f \geq 0$ on $[0,1]\times \R$, $f$ fully $m$-ic on $[0, 1]$, 
with only one zero in $[0,1]\times\R$ lying on $\{0,1\}\times\R$ 
but $\frac{\partial f}{\partial X}$ vanishing at it, and such that $f$
does not admit a value of $N \in \N_0$ as in Proposition \ref{prop:metodo_gral}. 
Note that in this example, $f$ is itself a sum of
squares, so the representation as in (\ref{repr_f}) is already given; nevertheless, our purpose is to show that 
there is no hope of applying the method underlying Proposition \ref{prop:metodo_gral} in full generality.

\begin{example}\label{ex:caso_malo}
Let
$$
f(X,Y)= (Y^2-X)^2+X^2=Y^4-2XY^2+2X^2.
$$
Then
$$
F(W,X,Y,Z)=(W+X)^2Y^4-2X(W+X)Y^2Z^2+2X^2Z^4.
$$
and for $N \in \N$, 
$$
(W+X)^NF(W,X,Y,Z) = 
Y^4 W^{N+2} + Y^2 \left( (N+2)Y^2-2Z^2 \right) W^{N+1}X + \dots
$$
It is easy to see that it does not exist $N \in \N_0$ such that
$$
b_{N+1}(Y,Z)=Y^2 \left( (N+2)Y^2-2Z^2 \right)
$$
is non-negative on $C$. 
\end{example}

\textbf{Acknowledgments:} We want to thank the anonymous referee for her/his helpful suggestions.

\end{document}